\newtheorem{example}{Example}
\newtheorem{theorem}{Theorem}
\newtheorem{definition}{Definition}
\newcommand{\nulll}{\textup{null}}
\newcommand{\rank}{\textup{rank}}
\newcommand{\Res}{\textup{Res}}
\newcommand{\I}{I}
\newcommand{\OO}{\mathcal{O}}
\newcommand{\Ints}{\mathbb{Z}_{\geq 0}}
\newcommand{\Z}{\mathbb{Z}}
\newcommand{\C}{\mathbb{C}}
\newcommand{\PP}{\mathbb{P}}
\newcommand{\V}{\mathbb{V}}
\newcommand{\x}{x}
\newcommand{\z}{z}
\newcommand{\R}{\mathbb{R}}
\newcommand{\Span}{\textup{span}}
\pgfplotsset{
  log x ticks with fixed point/.style={
      xticklabel={
        \pgfkeys{/pgf/fpu=true}
        \pgfmathparse{exp(\tick)}%
        \pgfmathprintnumber[fixed relative, precision=3]{\pgfmathresult}
        \pgfkeys{/pgf/fpu=false}
      }
  }}
\begin{document}
\title{A Stabilized Normal Form Algorithm for Generic Systems of Polynomial Equations}
\author{Simon Telen, Marc Van Barel\thanks{Supported by
  the Research Council KU Leuven,
PF/10/002 (Optimization in Engineering Center (OPTEC)),
C1-project (Numerical Linear Algebra and Polynomial Computations),
by
the Fund for Scientific Research--Flanders (Belgium),
G.0828.14N (Multivariate polynomial and rational interpolation and approximation),
   and by
   the Interuniversity Attraction Poles Programme, initiated by the Belgian State,  Science Policy Office,
   Belgian Network DYSCO (Dynamical Systems, Control, and Optimization).
}
}
\maketitle

\begin{abstract}
We propose a numerical linear algebra based method to find the multiplication operators of the quotient ring $\C[\x]/\I$ associated to a zero-dimensional ideal $\I$ generated by $n$ $\C$-polynomials in $n$ variables. We assume that the polynomials are generic in the sense that the number of solutions in $\C^n$ equals the B\'ezout number. The main contribution of this paper is an automated choice of basis for $\C[\x]/\I$, which is crucial for the feasibility of normal form methods in finite precision arithmetic. This choice is based on numerical linear algebra techniques and it depends on the given generators of $\I$.  
\end{abstract}

\section{Introduction}
Consider the following problem. Given $n$ polynomials $f_1, \ldots, f_n \in k[x_1,\ldots,x_n]$ with $k$ an algebraically closed field, find all the points $\x \in k^n$ where they all vanish: $f_1(\x) = \ldots = f_n(\x) = 0$. Here, we will work over the complex numbers $k = \C$. The ring of all polynomials in the $n$ variables $x_1, \ldots, x_n$ with coefficients in $\C$ is denoted by $\C[x_1,\ldots,x_n]$. For short, we will denote $\x = (x_1,\ldots,x_n)$ and an element $f \in \C[\x]$ can be written as 
$$ f = \sum_{\alpha \in \Ints^n} c_{\alpha} \x^{\alpha}$$
where we used the short notation $\x^{\alpha} = x_1^{\alpha_1} \cdots x_n^{\alpha_n}$. The \textit{support} $S(f)$ of $f$ is defined as
$$S(f) = \{ \alpha \in \Ints^n: c_{\alpha} \neq 0 \}.$$
A set of $n$ polynomials $\{f_1, \ldots, f_n \} \subset \C[\x]$ defines a \textit{square ideal} 
$$\I = \langle f_1, \ldots, f_n \rangle = \{g_1f_1+ \ldots + g_nf_n: g_1,\ldots,g_n \in \C[\x] \} \subset \C[\x].$$ 
The \textit{affine variety} associated to $I$ is 
$$ \V(\I) = \{ \x \in \C^n: f(\x) = 0, \forall f \in \I\} =  \{ \x \in \C^n: f_1(\x) = \ldots = f_n(\x) = 0 \}.$$
In this paper, we assume that the variety $\V(\I)$ consists of finitely many points $\{\z_1, \ldots, \z_N \} \subset \C^ n$. Such a variety is called \textit{0-dimensional}. \\

A well known result in algebraic geometry states that the quotient ring $k[x_1, \ldots, x_n]/\I$ with $\I \subset k[x_1, \ldots, x_n]$ a 0-dimensional ideal and $k$ an algebraically closed field is isomorphic as a $k$-algebra to a finite dimensional $k$-vectorspace $V$ with multiplication defined by a pairwise commuting set of $n$ square matrices over $k$. This set of matrices corresponds to a set of generators of $k[x_1, \ldots, x_n]/\I$ and the size of each matrix is equal to the number of points in $\V(\I) \subset k^n$, counting multiplicities. Once the (generating) multiplication matrices are known in some basis, we can answer several questions about the variety $\V(\I)$. For example, we can retrieve the solutions of the system by computing their eigenstructure and we can evaluate any polynomial on $\V(\I)$. Our goal is to compute the multiplication matrices in a numerically stable way for square ideals satisfying some genericity assumptions. \\

There are many approaches to the problem of solving systems of polynomial equations. The different methods are often subdivided in homotopy methods, subdivision methods and algebraic methods. Homotopy continuation uses Newton iteration to track solution paths, starting from a simple initial system and gradually transforming it into the target system. These ideas have led to highly successful solvers \cite{bates2013numerically,verschelde1999algorithm}. However, performing some numerical experiments one observes that for large systems some solutions might be lost along the way. The continuation gives up on certain paths when, for example, they seem to be diverging to infinity or they enter an ill-conditioned region. Normal form algorithms belong to the category of algebraic methods. The earliest versions of these algorithms use Groebner bases \cite{cox1,cox2} and doing so they make an implicit choice of basis for $\C[\x]/\I$. It turns out that these methods are numerically unstable and infeasible for large systems of equations (high degree, many variables). More recent algorithms are based on \textit{border bases} \cite{mourrain1999new,stetter,mourrain2007pythagore}. Essentially, they fix a basis $\OO$ for $\C[\x]/\I$ and construct the multiplication matrices of the coordinate functions by calculating the normal forms of $x_1 \cdot \OO$, \ldots, $x_n \cdot \OO$ with respect to $\OO$. Border bases are a generalization of Groebner bases and they can be used to enhance the numerical stability of normal form algorithms. However, there are no algorithms that make a choice of $\OO$ based on the conditioning of the normal form computation problem. This is mentioned as an open problem in \cite{mourrain2007pythagore}. In this paper we present such an algorithm for generic systems that makes an automatic choice of $\OO$, which does not necessarily correspond to a Groebner basis, nor to a border basis. What is meant by `generic systems' is explained in Section \ref{sec:genericsystems}. The goal is to cover the generic, dense case to illustrate the effectiveness of the idea. The connection with resultant algorithms for dense systems is established. This suggests that the techniques can be generalized to  sparse systems of equations. Such a generalization will follow from the sparse variant of the Macaulay resultant algorithm, see for instance \cite{emir1}.  \\

In the following section we discuss our genericity assumptions and some properties of the systems that satisfy them. Section \ref{sec:multmtces} briefly reviews the multiplication maps in $\C[\x]/\I$ and their properties. We give a short motivation in Section \ref{sec:motivation} by discussing some aspects of Macaulay's resultant construction and border bases algorithms that are generalized in our approach. In Section \ref{sec:macmtx} we introduce a construction that we call \textit{Macaulay matrices}. Section \ref{sec:normalform} presents the algorithm and some connections with border bases and Macaulay resultants. In the final section we present some numerical experiments.

\section{Generic total degree systems} \label{sec:genericsystems}
We say that a polynomial $f \in \C[\x] \backslash \{0\}$ is of degree $d$ if 
$$ \max_{\alpha \in S(f)} | \alpha| = d,$$ where $|\alpha| = \alpha_1+ \ldots + \alpha_n$. We denote $\deg(f) = d$. Accordingly, we say that a square polynomial system in $n$ variables given by $\{f_1,\ldots,f_n \}$ is of degree $(d_1, \ldots, d_n)$ if $\deg(f_i) = d_i, i=1, \ldots,n$. A polynomial $f \in \C[\x]\backslash \{0\}$ is called \textit{homogeneous} of degree $d$ if $|\alpha | = d, \forall \alpha \in S(f)$. \\
Consider the \textit{projective $n$-space} $$\PP^n = (\C^{n+1} \backslash \{0\} )/\sim,$$
where $(a_0, \ldots, a_n) \sim (b_0, \ldots, b_n)$ iff $a_i = \lambda b_i, i=0, \ldots, n, \lambda \in \C \backslash \{0\}$. We can interpret $\PP^n$ as the union of $n+1$ copies of $\C^n$, each of them given by putting one of the coordinates equal to 1. We will also think of $\PP^n$ as the union of $\C^n$ corresponding to $x_0 =1$ and the set $\{x_0=0 \}$, called the \textit{hyperplane at infinity}. For more on projective space, see \cite{cox1}. Note that the equation $f = 0$ with $f \in \C[x_0, \ldots, x_n]$ is well defined over $\PP^n$ if and only if $f$ is homogeneous. Starting from a polynomial $f \in \C[\x]$ in $n$ variables of degree $d$, we can obtain a homogeneous polynomial $f^h \in \C[x_0,\ldots, x_n]$, called the \textit{homogenization} of $f$ as 
$$ f^h = x_0^d f \left ( \frac{x_1}{x_0}, \ldots, \frac{x_n}{x_0} \right ).$$
The following theorem was proved by \'Etienne B\'ezout for the intersection of algebraic plane curves in $\PP^2$. The generalization is often referred to as B\'ezout's theorem.
\begin{theorem}[B\'ezout] \label{bezout}
A system of $n$ homogeneous equations of degree $(d_1,\ldots,d_n)$ in $n+1$ variables with a finite number of solutions in $\PP^n$ has exactly $d_1\cdots d_n$ solutions in $\PP^n$, counting multiplicities.
	\end{theorem}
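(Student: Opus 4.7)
The plan is to pass to the homogeneous coordinate ring $S = \C[x_0,\ldots,x_n]$ and compute the Hilbert series of $S/\I$, where $\I = \langle f_1,\ldots,f_n\rangle \subset S$. The hypothesis that $\V(\I) \subset \PP^n$ is finite means the projective scheme has dimension zero, equivalently the affine cone in $\C^{n+1}$ has Krull dimension one. Since $S$ is Cohen--Macaulay of dimension $n+1$ and the $n$ homogeneous forms $f_1,\ldots,f_n$ cut this dimension down by exactly $n$, they form a regular sequence in $S$; this is the crucial structural input.

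With the regular sequence in hand, the next step is to write down the graded Koszul resolution
$$0 \to S(-d_1-\cdots-d_n) \to \cdots \to \bigoplus_{i<j} S(-d_i-d_j) \to \bigoplus_{i} S(-d_i) \to S \to S/\I \to 0,$$
and take the alternating sum of the Hilbert series of its terms. This collapses to
$$\mathrm{HS}_{S/\I}(t) \;=\; \frac{\prod_{i=1}^{n}(1-t^{d_i})}{(1-t)^{n+1}} \;=\; \frac{\prod_{i=1}^{n}(1+t+\cdots+t^{d_i-1})}{1-t}.$$
Writing the numerator as $Q(t)$, one has $Q(1) = d_1 \cdots d_n$, and a power-series expansion of the right-hand side shows that for $m$ sufficiently large the coefficient of $t^m$, namely $\dim_\C (S/\I)_m$, stabilizes to the constant $Q(1) = d_1 \cdots d_n$.

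The final step is to identify this stable dimension with the number of points of $\V(\I)$ counted with multiplicity. For the $0$-dimensional projective scheme $X = \V(\I)$ one has, for $m$ large, $\dim_\C (S/\I)_m = \sum_i \dim_\C \OO_{X,\z_i}$, and the right-hand side is the total intersection multiplicity by definition. This translation from Hilbert-polynomial constant to weighted point count is the one place I would rely on standard intersection-theoretic machinery, and it is the main technical hurdle in the argument. An alternative route better suited to the numerical-homotopy flavour of the paper would be to deform the system into a product of $d_1 \cdots d_n$ generic linear forms (for which the transverse intersection points are directly visible) and then invoke conservation of the total intersection number under flat deformation in $\PP^n$.
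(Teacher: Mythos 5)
Your argument is correct, but it takes a genuinely different route from the paper: the paper dispatches the statement by citing Theorem 7.7 of Chapter I of Hartshorne, which is the one-hypersurface-at-a-time form of B\'ezout proved there via Serre's $\mathrm{Tor}$-based intersection multiplicity, and one iterates it $n$ times. Your route is a direct commutative-algebraic computation: finiteness of the projective zero set plus Cohen--Macaulayness of $S=\C[x_0,\ldots,x_n]$ forces the $n$ forms to be a regular sequence, the graded Koszul complex supplies a free resolution, and the Hilbert series collapses to $Q(t)/(1-t)$ with $Q(t)=\prod_i(1+t+\cdots+t^{d_i-1})$, so the Hilbert function stabilizes to $Q(1)=d_1\cdots d_n$ once the degree exceeds $\deg Q = \sum_i d_i - n$. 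The one step you flag as relying on outside machinery --- that the stable value of the Hilbert function of a zero-dimensional projective scheme equals the weighted point count --- is indeed the point of contact with the standard theory, and the two notions of multiplicity (Serre's $\mathrm{Tor}$-multiplicity and the length of the local ring) agree here precisely because higher $\mathrm{Tor}$ vanishes along a regular sequence. Your approach buys something concrete for this paper that the citation leaves implicit: the Koszul computation shows that $Q(t)$ has degree exactly $\sum_i d_i - n$, which is the regularity threshold the paper uses and essentially proves Theorem~\ref{theo:nullM} at the same time. Your alternative sketch, deforming to a product of generic linear forms and invoking conservation of number, is also legitimate, but it needs flatness of the family over the parameter space, which is itself equivalent to the regular-sequence input of your first argument, so it is not really shorter once the details are filled in.
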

	\begin{proof}
		The theorem is a corollary of Theorem 7.7 in \cite{hartshorne}. 
	\end{proof}
It is not difficult to show that for \textit{almost all} systems with degree $(d_1, \ldots,d_n)$, all $d_1 \cdots d_n$ solutions lie in the overlapping part of the affine charts of $\PP^n$ \cite{cox2}. Hence, if the $n$ homogeneous equations in $n+1$ variables of Theorem \ref{bezout} are the homogenizations of $n$ affine equations $f_1 = \ldots = f_n = 0$ in $n$ variables, all of the $d_1 \cdots d_n$ solutions correspond to points in $\C^n \subset \PP^n$. \\

The kind of systems that we consider in this paper are the ones that satisfy the assumption of B\'ezout's theorem. Namely, we assume that the homogenized equations $f_1^h = \ldots = f_n^h = 0$ have a finite number of solutions in $\PP^n$. We denote $\overline{I} = \langle f_1^h,\ldots, f_n^h \rangle$ and $\V(\overline{\I}) = \{\x \in \PP^n: f_1^h(\x) = \ldots = f_n^h(\x) = 0 \}$. Furthermore, we assume that none of the solutions lie on the hyperplane at infinity. Note that this last assumption is not really restrictive: a random linear change of projective coordinates will move all of the solutions away from the hyperplane $\{x_0=0\}$ with probability 1.

\section{Multiplication in $\C[\x]/\I$} \label{sec:multmtces}
In this section we briefly review the $\C$-algebra structure of the quotient ring $\C[\x]/\I$ and the properties of multiplication in this ring. For an extensive treatment one can consult \cite{cox1, cox2, stetter, elkadi_introduction_2007}. Consider the following equivalence relation on $\C[\x]$:
$$ f \sim g \Leftrightarrow f-g \in \I. $$
Now, every polynomial $f \in \C[\x]$ defines a residue class $[f] = f + \I$ with respect to $\sim$. We call the polynomial $f$ a \textit{representative} of the residue class $[f]$. The set of all such residue classes is the \textit{quotient ring} $\C[\x]/ \I$. Note that $[0]= \I$. One can check that the scalar multiplication and addition operations
\begin{equation} \label{vecops}
\alpha [f] = [\alpha f], \qquad [f]+[g] = [f+g]
\end{equation}
with $\alpha \in \C$ and $f,g \in \C[\x]$ are well defined. This implies that $\C[\x]/ \I$ is a vector space. Moreover, to show that $\C[\x]/ \I$ is a $\C$-algebra, it can be checked that multiplication $$[f] \cdot [g] = [fg]$$ is well defined. The following theorem allows us to describe these operations on $\C[\x]/\I$ using linear algebra.
\begin{theorem} \label{theo:dim}
For a zero-dimensional ideal $\I$, the dimension of $\C[\x]/ \I$ as a vector space is equal to the number of points in $\V(\I) \subset \C^n$, counting multiplicities. 
\end{theorem}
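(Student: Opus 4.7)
The plan is to reduce the computation of $\dim_\C \C[\x]/\I$ to a local computation at each point of $\V(\I)$ via the Chinese Remainder Theorem, and then interpret each local contribution as the multiplicity of that point. Write $\V(\I) = \{\z_1, \ldots, \z_N\}$. By Hilbert's Nullstellensatz, the maximal ideals of $\C[\x]$ containing $\I$ are exactly $\mathfrak{m}_i = \langle x_1 - z_{i,1}, \ldots, x_n - z_{i,n} \rangle$ for $i = 1, \ldots, N$, so the set of maximal ideals of $\C[\x]/\I$ is finite. This structural fact, which uses zero-dimensionality, is what drives the rest of the argument.

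First, I would produce a primary decomposition $\I = Q_1 \cap \cdots \cap Q_N$ with $\sqrt{Q_i} = \mathfrak{m}_i$; this exists because $\C[\x]$ is Noetherian and every associated prime of $\C[\x]/\I$ is among the $\mathfrak{m}_i$. Since $Q_i$ is $\mathfrak{m}_i$-primary with $\mathfrak{m}_i$ maximal, there exists $k_i \geq 1$ with $\mathfrak{m}_i^{k_i} \subseteq Q_i$. The distinct maximal ideals $\mathfrak{m}_i$ are pairwise coprime, hence so are their powers, and therefore $Q_i + Q_j = \C[\x]$ for $i \neq j$. The Chinese Remainder Theorem then supplies the $\C$-algebra isomorphism
\[
\C[\x]/\I \;\cong\; \prod_{i=1}^N \C[\x]/Q_i.
\]

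Second, I would identify each factor $\C[\x]/Q_i$ with the local ring $(\C[\x]/\I)_{\mathfrak{m}_i}$. The only prime of $\C[\x]/Q_i$ is $\mathfrak{m}_i/Q_i$, so $\C[\x]/Q_i$ is already local and the canonical map to its localization is an isomorphism; equivalently, the CRT decomposition above is the splitting of $\C[\x]/\I$ into its local components. By definition, the multiplicity of $\z_i$ as a zero of $\I$ is $\dim_\C (\C[\x]/\I)_{\mathfrak{m}_i}$, and each of these dimensions is finite because $\C[\x]/Q_i$ is a Noetherian local ring whose maximal ideal is nilpotent (as $\mathfrak{m}_i^{k_i} \subseteq Q_i$), hence Artinian of finite length. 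Summing over $i$ gives
\[
\dim_\C \C[\x]/\I \;=\; \sum_{i=1}^N \dim_\C \C[\x]/Q_i \;=\; \sum_{i=1}^N \textup{mult}(\z_i),
\]
which is exactly the number of points of $\V(\I)$ counted with multiplicity.

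The main obstacle is the comaximality step and the existence of a uniform $k_i$ with $\mathfrak{m}_i^{k_i} \subseteq Q_i$: both are where zero-dimensionality is genuinely used. Without finiteness of $\V(\I)$, the associated primes could include non-maximal ones, the primary components would not be comaximal, and the product decomposition would fail. Once that decomposition is established, the theorem is essentially the definition of ``counted with multiplicity.''
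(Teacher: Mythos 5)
Your proof is correct and is essentially the standard argument: primary decomposition of a zero-dimensional ideal, comaximality of the primary components at distinct maximal ideals, the Chinese Remainder Theorem to split $\C[\x]/\I$ into local Artinian pieces, and the definition of multiplicity as the $\C$-dimension of each local factor. The paper offers no proof of its own and simply cites Cox, Little, and O'Shea (\emph{Using Algebraic Geometry}, Chapter 4), which uses precisely this approach, so you have reconstructed the cited argument.
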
 
\begin{proof}
For the proof of this theorem we refer to \cite{cox2}.
\end{proof}
We now consider the map $m_f: \C[\x]/\I \rightarrow \C[\x]/\I$ given by $$m_f([g]) = [f] \cdot [g] = [fg], \forall g \in \C[\x].$$ This map is linear, so once we choose a basis $\OO$ for $\C[\x]/\I$, it can be represented by an $N \times N$ matrix, where $N$ is the number of solutions (counting multiplicities). Under our genericity assumptions, $N$ is the B\'ezout number: $N = \prod_{i=1}^n d_i$. Once we have fixed a basis of $\C[\x]/\I$, we will no longer make a distinction between the map $m_f$ and its matrix representation. The matrix representing multiplication by $f$ is called a \textit{multiplication matrix} of $f$. Its eigenstructure has the following remarkable properties.
\begin{theorem} \label{theo:evals}
Let $\I$ be a zero-dimensional ideal in $\C[\x]$ and let $m_f$ be the multiplication matrix of $f \in \C[\x]$ with respect to a given basis $\OO = \{[b_1],\ldots, [b_N] \}$ of $\C[\x]/\I$. Then 
$$ \det(m_f - \lambda \mathbb{I}) = (-1)^N \prod_{\z \in \V(\I)} (\lambda - f(\z))^{\mu(\z)}$$
where $N = \dim \C[\x]/\I$, $\mathbb{I}$ is the identity matrix of size $N \times N$ and $\mu(\z)$ is the multiplicity of the root $\z$. Also, the row vector
$$\begin{bmatrix}
	b_1(\z)& \cdots & b_N(\z)
\end{bmatrix}$$
lies in the left eigenspace of the eigenvalue $f(\z)$ for each $\z \in \V(\I)$\footnote{Note that in general $\# \V(\I) \leq N$ where equality only holds if all solutions are simple.}.
\end{theorem}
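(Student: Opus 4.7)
The plan is to reduce both claims to the structure theorem for zero-dimensional quotient algebras and then do a short direct computation.

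First I would invoke the primary decomposition (or Chinese Remainder Theorem) for the zero-dimensional radical-free quotient: there is a $\C$-algebra isomorphism
\[
\C[\x]/\I \;\cong\; \prod_{\z \in \V(\I)} A_\z,
\]
where $A_\z$ is the local Artinian $\C$-algebra obtained by localizing at the maximal ideal of $\z$. Its $\C$-dimension is exactly $\mu(\z)$, and by Theorem \ref{theo:dim} the dimensions sum to $N$. The key point is that $m_f$ respects this decomposition, so its matrix (in any basis compatible with the product) is block-diagonal with one block $m_f^{(\z)}$ per point of $\V(\I)$, and the characteristic polynomial of $m_f$ factors as $\prod_{\z} \det(m_f^{(\z)} - \lambda \mathbb{I})$. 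The characteristic polynomial is independent of the chosen basis, so it suffices to verify the factorization block-by-block.

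Next I would analyze $m_f^{(\z)}$ on the local factor $A_\z$. In $A_\z$ the element $[f - f(\z)]$ lies in the unique maximal ideal, which is nilpotent because $A_\z$ is Artinian local. Hence $m_{f - f(\z)}^{(\z)} = m_f^{(\z)} - f(\z)\mathbb{I}$ is nilpotent on a space of dimension $\mu(\z)$, so its characteristic polynomial is $(-1)^{\mu(\z)}(\lambda - f(\z))^{\mu(\z)}$. Multiplying across all $\z$ gives the claimed formula with the overall sign $(-1)^N$.

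For the left eigenvector statement I would use the evaluation functional. For any $\z \in \V(\I)$, the map $\mathrm{ev}_\z : \C[\x] \to \C$, $g \mapsto g(\z)$, vanishes on $\I$ (each generator $f_i$ vanishes at $\z$, and more generally so does every element of $\I$), hence descends to a $\C$-algebra homomorphism $\mathrm{ev}_\z : \C[\x]/\I \to \C$. In the basis $\OO = \{[b_1],\ldots,[b_N]\}$ it is represented by the row vector $(b_1(\z), \ldots, b_N(\z))$. Since $\mathrm{ev}_\z$ is multiplicative, for every $[g]$ we have
\[
\mathrm{ev}_\z\bigl(m_f([g])\bigr) \;=\; \mathrm{ev}_\z([fg]) \;=\; f(\z)\, g(\z) \;=\; f(\z)\,\mathrm{ev}_\z([g]),
\]
so as linear functionals $\mathrm{ev}_\z \circ m_f = f(\z) \cdot \mathrm{ev}_\z$. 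Translating to matrices, this is exactly
\[
\begin{bmatrix} b_1(\z) & \cdots & b_N(\z) \end{bmatrix} m_f \;=\; f(\z)\,\begin{bmatrix} b_1(\z) & \cdots & b_N(\z) \end{bmatrix},
\]
as desired. The only step one must be careful about is the product decomposition of $\C[\x]/\I$; everything else is formal. Nondegeneracy (that the row vector is nonzero, at least when the $b_i$ genuinely span the quotient and $\z \in \V(\I)$) is automatic since $\mathrm{ev}_\z$ is surjective onto $\C$ and hence nonzero as a functional.
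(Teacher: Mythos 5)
Your proof is correct and follows the standard argument that the paper delegates to \cite{cox2}: decompose the Artinian quotient $\C[\x]/\I$ into local factors $A_\z$, observe that $m_f$ is block-diagonal with respect to this decomposition and that $m_f^{(\z)} - f(\z)\mathbb{I}$ is nilpotent on each local factor (since $f - f(\z)$ lies in the nilpotent maximal ideal of $A_\z$), and read off the characteristic polynomial; the left-eigenvector claim follows from the fact that $\mathrm{ev}_\z$ is a well-defined algebra homomorphism on $\C[\x]/\I$ represented by the row vector $(b_1(\z), \ldots, b_N(\z))$. The sign bookkeeping $(-1)^{\sum_\z \mu(\z)} = (-1)^N$ and the nonvanishing of the eigenvector are both handled correctly.
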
\begin{proof}
For the proof, we refer the reader to \cite[Chapter~4]{cox2}.
\end{proof}
Theorem \ref{theo:evals} implies that if we want to compute the coordinates of the solutions $z_1,\ldots, z_N$, we can construct the multiplication matrices $m_{x_1},\ldots, m_{x_N}$ corresponding to the coordinate functions and compute their eigenvalues. Another possibility is to use the eigenvectors \cite{stetter, cox2}. Note that, according to Theorem \ref{theo:evals}, the left eigenvectors do not depend on the choice of $f$. In fact, neither do the right ones. By their definition, it is not difficult to see that the multiplication maps must commute. They form a family of commuting matrices, so they must share common eigenspaces \cite{stetter}. We note here that when the set of eigenvectors spans $\C^N$ (that is, when all solutions are simple), the matrices $m_{x_1}, \ldots, m_{x_n}$ are simultaneously diagonalizable. We will give an example of the construction of the multiplication matrices of the coordinate functions in Section \ref{sec:normalform}. To work out this example, we will need the notion of a \textit{normal form}.

\begin{definition}[Normal form]
Let $\OO = \{[b_1], \ldots, [b_N]\}$ be a basis for $\C[\x]/\I$. Given any polynomial $g \in \C[\x]$, let 
$$ [g] = a_1 [b_1] + \ldots + a_N [b_N] = [a_1b_1 + \ldots + a_N b_N], \quad  a_i \in \C$$ be the unique representation of $[g]$ in the basis $\OO$. We say that $a_1b_1 + \ldots + a_N b_N$ is the \textup{normal form} of $g$ w.r.t. $\OO$. We denote $\overline{g}^{\OO} = a_1b_1 + \ldots + a_N b_N$.
\end{definition}
Note that for any $g \in \C[\x]$, if the basis elements $[b_i]$ are given by monomials: $[b_i] = [x^{\alpha_i}]$, we have that $S(\overline{g}^\OO) \subset \{\alpha_1,\ldots,\alpha_N \}$. In general $S(\overline{g}^\OO) \subset \bigcup_{i=1}^N S(b_i)$. The results in this section show that normal form algorithms for generic systems can be divided into two major parts. 
\begin{enumerate}
\item Compute the multiplication operators $m_{x_i}$, $i = 1, \ldots, n$.
\item Perform a simultaneous diagonalization of the $m_{x_i}$ to find the solutions or find the solutions via the eigenvectors of the $m_{x_i}$.
\end{enumerate}
In this paper, we focus on making improvements in the first step. The proposed algorithm will choose a basis $\OO$ for $\C[\x]/I$ such that the multiplication operators can be computed, heuristically, as accurately as possible.
\section{Motivation} \label{sec:motivation}
The normal form method presented in this paper is closely related to border basis algorithms and to multipolynomial Macaulay resultants. We briefly review some of their properties that are exploited or generalized in our algorithm. For more details on border bases we refer to \cite{mourrain1999new,elkadi_introduction_2007}, and for multipolynomial resultants to \cite{cox2}.

\subsection{Macaulay resultant matrices}
Consider the system of homogenized equations $f_1^h = \ldots =  f_n^h= 0$ coming from $I = \langle f_1, \ldots, f_n \rangle$. As discussed in Section \ref{sec:genericsystems}, the expected number of solutions in $\PP^n$ is finite and equal to B\'ezout's number. If we add a generic homogeneous equation $f_0^h = 0$ to the system, then generically the system has no solutions. The \textit{resultant} $\Res(f_0^h, \ldots, f_n^h)$ is a homogeneous polynomial in the coefficients of the $f_i^h$ that vanishes if and only if the system $f_0^h = \ldots = f_n^h = 0$ has a solution in $\PP^n$. A resultant matrix $M_0$ is a matrix such that $\det(M_0)$ is a nonzero multiple of the resultant polynomial. Several constructions of resultant matrices have been introduced \cite{elkadi_introduction_2007, cox2,cattani2005solving}. The one that is related in the most direct way to the algorithm presented in this paper is a generalization of the Sylvester matrix of two univariate polynomials to the multivariate case, also called the multipolynomial Macaulay resultant matrix \cite{cox2}. The rows in this matrix correspond to monomial multiples of the input equations, whereas its columns correspond to monomials, such that the coefficient of the polynomial corresponding to the $j$-th row coupled to the $i$-th monomial is the $(j,i)$ entry of the matrix. We denote this matrix by $M_0$. Let $f_0^h$ be a generic linear form and let $M_0$ be the Macaulay resultant matrix associated to $f_0^h, f_1^h, \ldots, f_n^h$. We view it as a block matrix
$$ M_0 = \begin{bmatrix}
M_{00} & M_{01} \\ M_{10} & M_{11}
\end{bmatrix},$$
such that the first block row $[M_{00} ~ M_{01}]$ contains the multiples of $f_0^h$ by the set of monomials
\begin{equation} \label{blockbasis1}
\OO_M = \{x_1^{\alpha_1}x_2^{\alpha_2}\cdots x_n^{\alpha_n} : 0\leq \alpha_i \leq d_i - 1, 1\leq i \leq n \}
\end{equation}
and the second block row contains the monomial multiples of the other $f_i^h$. Furthermore, we assume that the columns are ordered in such a way that the first block column $\begin{bmatrix}
M_{00} \\ M_{10}
\end{bmatrix}$ corresponds to the monomials in $\OO_M$. In \cite[Chapter 3]{cox2} it is shown that, with our genericity assumptions, we have that the Schur complement $m_{f_0} = M_{00} - M_{10}M_{11}^{-1}M_{01}$ represents multiplication by $f_0 = f_0^h(1,x_1, \ldots, x_n)$ in $\C[x]/I$ in the basis $\OO_M$. Macaulay \cite{macaulay1994algebraic} showed that generically, $M_{11}$ is invertible and hence it is possible to compute this Schur complement. One could, for instance, use $f_0^h = x_i$ to find $m_{x_i}$ in this way and find the solutions of $I$ by using the results in Section \ref{sec:multmtces}. This leads to a well known eigenvalue-eigenvector method for solving generic dense systems, based on resultants. However, when computations are performed in finite precision, the accuracy of the resulting matrix $m_{f_0}$ depends on the condition number of the inversion of $M_{11}$. That is, the `more invertible' $M_{11}$ is, the more accurate the operator $m_{f_0}$ can be obtained from this matrix, hence the more accurate we can compute its eigenstructure to find the points defined by $I$.  The algorithm proposed in this paper somehow chooses the partitioning of $M_0$ in an adaptive way, such that $M_{11}$ is well conditioned and the Schur complement still gives the multiplication map. 

\subsection{Border bases}
Groebner bases and Buchberger's algorithm to compute them provide an algorithmic, algebraic way to compute the solutions of a system of polynomial equations \cite{cox1,buchberger1998grobner,faugere1999new}. They can be used to compute normal forms in a basis for $\C[x]/I$ induced by a monomial order. A major drawback is that for large problems, Groebner bases are not feasible in finite precision, since the computations are unstable. Border bases have been developed as a generalization of Groebner bases to represent the quotient algebra $\C[x]/I$ \cite{mourrain_generalized_2005,mourrain_stable_2008,mourrain1999new}. With respect to Groebner bases, they enhance the numerical stability due to a more flexible choice of monomial bases for $\C[x]/I$ and they are also more robust (the border basis remains a basis for small perturbations of the coefficients) \cite{mourrain2007pythagore}. A border basis $\OO$ has the property that it is \textit{connected to 1}. This means that $1 \in \Span(\OO)$ and and, for any $g \in \Span(\OO)$ there are $g_1, \ldots, g_n \in \Span(\OO)$ such that 
$$ g = \sum_{i=1}^n x_i g_i.$$
The border basis criterion for normal form algorithms is given by the following theorem \cite{mourrain1999new}. 
\begin{theorem} \label{thm:BB}
Let $B = \Span(\OO) \subset \C[x]$ be such that $\OO$ is connected to 1. Let $N: B \cup \left(\bigcup_{i=1}^n x_i \cdot B \right) \rightarrow B$ be a $\C$-linear map such that it is the identity restricted to $B$. Let $I = \langle \ker N \rangle$ be the ideal generated by the kernel of $N$. Defining $M_i : B \rightarrow B : b \mapsto N(x_ib)$, the following properties are equivalent: 
\begin{enumerate}
\item $M_i \circ M_j = M_j \circ M_i$, 
\item $\C[x] = B \oplus I$.
\end{enumerate}
\end{theorem}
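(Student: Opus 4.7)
The plan is to handle the two directions separately. The direction $(2) \Rightarrow (1)$ is the easy one: assuming $\C[x] = B \oplus I$, I would let $\pi : \C[x] \to B$ be the projection along $I$ and check that the given $N$ coincides with $\pi$ on its domain. For $b \in B$ this is by hypothesis, and for $x_ib$ one observes $N(x_ib) \in B$ and $x_ib - N(x_ib) \in \ker N \subset I$, so $\pi(x_ib) = \pi(N(x_ib)) = N(x_ib)$. Therefore $M_i : B \to B$ is the restriction of ``multiplication by $x_i$'' in the commutative ring $\C[x]/I \cong B$, and the $M_i$ commute.

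For $(1) \Rightarrow (2)$ the key construction is an extension of $N$ to all of $\C[x]$. Using commutativity of the $M_i$, I would define $\tilde{N} : \C[x] \to B$ on monomials by $\tilde{N}(x^\alpha) := (M_1^{\alpha_1} \circ \cdots \circ M_n^{\alpha_n})(1)$, well-defined because the $M_i$ pairwise commute, and extend $\C$-linearly. Two properties follow directly: (a) $\tilde{N}(x_ip) = M_i(\tilde{N}(p))$ for every $p \in \C[x]$; and (b) $\tilde{N}$ restricts to the identity on $B$. Property (b) requires the connected-to-$1$ hypothesis: for each basis element $x^\beta \in \OO$ one picks a chain of monomials in $\OO$ connecting $x^\beta$ to $1$ via multiplications by variables, and verifies along the chain, using $N|_B = \mathrm{id}$, that $\tilde{N}(x^\gamma) = x^\gamma$. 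Combined with (a), this gives $\tilde{N}(x_ib) = M_i(b) = N(x_ib)$, so $\tilde{N}$ genuinely extends $N$, and $\C[x] = B \oplus \ker \tilde{N}$ as $\C$-vector spaces.

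It remains to prove $\ker \tilde{N} = I$. The inclusion $I \subset \ker \tilde{N}$ follows from $\ker N \subset \ker \tilde{N}$ together with the observation that property (a) makes $\ker \tilde{N}$ stable under multiplication by each $x_i$, hence an ideal. For the reverse inclusion I would show by induction on $|\alpha|$ that $x^\alpha - \tilde{N}(x^\alpha) \in I$: the base $\alpha = 0$ is trivial, and the step writes $x^\alpha = x_i x^{\alpha'}$, uses the inductive hypothesis to reduce modulo $I$ to $x_i \tilde{N}(x^{\alpha'})$, then absorbs the difference $x_i \tilde{N}(x^{\alpha'}) - N(x_i\tilde{N}(x^{\alpha'})) \in \ker N \subset I$, noting that $N(x_i\tilde{N}(x^{\alpha'})) = M_i(\tilde{N}(x^{\alpha'})) = \tilde{N}(x^\alpha)$. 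Linear extension gives $p - \tilde{N}(p) \in I$ for every $p$, so $\ker \tilde{N} \subset I$, and the decomposition $\C[x] = B \oplus I$ follows.

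I expect the main obstacle to be the precise articulation of property (b): pinning down how the combinatorial connected-to-$1$ hypothesis on $\OO$ forces the formally defined $\tilde{N}$ to agree with $N$ everywhere on $B$, rather than only on a distinguished spanning subset. The remaining ingredients — well-definedness of $\tilde{N}$, the ideal property of $\ker \tilde{N}$, and the inductive reduction to normal form — are essentially bookkeeping once this compatibility is settled.
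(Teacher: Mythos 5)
The paper does not prove Theorem~\ref{thm:BB}; it cites the result to Mourrain's original work, so there is no internal proof to compare against. Your argument is correct and is essentially the standard one for this border basis criterion.

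A couple of small remarks. Commutativity of the $M_i$ is not what makes $\tilde{N}$ well-defined on monomials (you already fixed the composition order $M_1^{\alpha_1}\circ\cdots\circ M_n^{\alpha_n}$ in its definition); it is what establishes the recursion $\tilde{N}(x_ip)=M_i(\tilde{N}(p))$, and that identity is what everything else hinges on. The worry you flag at the end about property (b) is not a real gap: $\OO$ is a set of monomials spanning $B$, so once the chain argument (which is exactly where ``connected to $1$'' enters) gives $\tilde{N}(x^\gamma)=x^\gamma$ for every $x^\gamma\in\OO$, linearity of $\tilde{N}$ and of the identity forces $\tilde{N}|_B=\mathrm{id}$ with no further work; there is no distinction between agreeing on a spanning set and agreeing on all of $B$ for a linear map. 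Your deduction that $\ker\tilde{N}$ is an ideal from property (a), the induction on $|\alpha|$ showing $\ker\tilde{N}\subset I$, and the $(2)\Rightarrow(1)$ direction via the projection $\pi$ along $I$ are all sound.
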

From $\C[\x] = \C[x]/I \oplus I$ it follows that when the $M_i$ from Theorem \ref{thm:BB} commute, $B \simeq \C[x]/I$ as $\C$-algebras and $M_i$ represents multiplication with $x_i$ modulo $I$ since $I = \langle\ker(N) \rangle$. Therefore, a basis $\OO$ must not be induced by a monomial order. It is sufficient that $\OO$ is connected to 1 and there is a map $N$ with the right properties: its kernel generates $I$ and the maps $M_i$ are pairwise commuting. Note that the basis $\OO_M$ from \eqref{blockbasis1} is connected to 1. We will show in Section \ref{sec:numexp} that this basis can still show bad numerical behaviour. In this paper we propose an algorithmic choice of basis that does not necessarily have the connected to 1 property. In border basis algorithms, the basis $\OO$ is fixed beforehand. This has the advantage that the algorithm can be adapted to this specific basis to reduce the computational cost. However, the choice of basis can influence the accuracy dramatically, as we will show in Section \ref{sec:numexp}. As specified in the following sections, the algorithm presented in this paper starts from a set of candidate monomials (which we will denote later by $S(M)_{<t}$) from which we will select the monomials in $\OO$. This set is quite large, and to choose the basis $\OO$ the algorithm uses numerical linear algebra techniques on a matrix called the \textit{Macaulay matrix}, very similar to Macaulay's resultant construction.

\section{Macaulay matrices} \label{sec:macmtx}
A \textit{Macaulay matrix} associated to the set of polynomials $\{f_1, \ldots, f_n \} \subset \C[\x]$ is a matrix over $\C$ in which each column corresponds to a monomial $\x^\alpha, \alpha \in \Ints^n$. Furthermore, such a Macaulay matrix has $n$ block rows, each of which corresponds to one of the polynomials in the set. The $j$-th row of the $i$-th block row is the vector representation of a polynomial $x^{\beta_{ij}}f_i \in \I, \beta_{ij} \in \Ints^n$ in the basis $\{\x^\alpha \}$ of monomials corresponding to the columns. For example, denote $R = \C[\x]$ and for an ideal $J \subset R$, we denote by $J_{\leq t}$ the elements in $J$ of degree $\leq t$. Let $d_i = \deg(f_i)$. For $t \geq \max_i d_i$, consider the linear map
\begin{alignat*}{2}
	 &\bigoplus_{i=1}^n R_{\leq t-d_i} \longrightarrow \I_{\leq t},  \\
	 &(a_1, \ldots, a_n) \longrightarrow a_1f_1+\cdots +a_nf_n.\end{alignat*} 
The transpose of the matrix representation of this map with respect to the standard monomial basis of $R_{\leq t}$ is a Macaulay matrix. We will call such a Macaulay matrix a \textit{dense Macaulay matrix}. We clarify this by means of an example.

\begin{example} \label{affmac}
Let $\I = \langle f_1,f_2 \rangle \subset \C[x_1,x_2]$ be generated by $f_1 = a +bx_1+cx_2$ and $f_2 = d + ex_1+fx_2 +gx_1^2+hx_1x_2 + jx_2^2$ with $a, \ldots, j \in \C$. It is clear that $\I_{\leq 2}$ is a subset of $R_{\leq 2}=\C[x_1,x_2]_{\leq 2}$ which is spanned as a $\C$-vector space by $1, x_1,x_2,x_1^2,x_1x_2,x_2^2$. Using this basis to represent elements of $\I_{\leq 2}$ and $R_{\leq 1} \oplus R_{\leq 0} = \Span(1,x_1,x_2) \oplus \Span(1)$ we get the transpose of the matrix

\[
  M=\kbordermatrix{%
      & 1  & x_1 & x_2 & x_1^2 & x_1x_2 & x_2^2  \\
    f_1 & a  & b & c &     &    &   \\
    x_1f_1&    & a &   & b   & c\\
    x_2f_1&    &   & a &     & b  & c \\
    f_2 & d  & e & f & g   & h  & j  
  }
\]
for the matrix representation of 
\begin{alignat*}{2}
	 &R_{\leq 1} \oplus R_{\leq 0} \longrightarrow \I_{\leq 2},  \\
	 &(a_1, a_2) \longrightarrow a_1f_1+a_2f_2.\end{alignat*}
The matrix $M$ is clearly a Macaulay matrix. 
\end{example}
By the \textup{support} of $M$, we mean the set of exponent vectors $$S(M) = \{\alpha \in \Ints^n : \x^\alpha \textup{ corresponds to a column of $M$} \}.$$
To describe the row content of $M$, we define the sets 
$$ \Sigma_i(M) = \{\beta_{ij} \in \Ints^n : \x^{\beta_{ij}} f_i \textup{ corresponds to a row of the $i$-th block row of $M$} \}.$$
The set $\Sigma_i$ is also called the set of \textup{shifts} of $f_i$. Note that, given the polynomials $f_i$, $M$ is defined up to row and column permutations by $S(M)$ and $\Sigma_i(M)$, $1\leq i \leq n$ and in order to be feasible, these sets must satisfy 
$$ S(x^{\beta_{ij}} f_i) \subset S(M), \forall \beta_{ij} \in \Sigma_i(M), 1\leq i \leq n.$$
\begin{example}
In the previous example, we have $S(M) = \{(0,0),(1,0),(0,1),(2,0),(1,1),(0,2)\}$, $\Sigma_1(M) = \{(0,0),(1,0),(0,1)\}$, $\Sigma_2(M) = \{(0,0)\}$. 
\end{example}
A Macaulay matrix of this type has a natural homogeneous interpretation. We show this by continuing the previous example. 
\begin{example} \label{hommac}
Homogenizing the equations we get $f_1^h = ax_0+bx_1+cx_2$ and $f_2^h = dx_0^2 + ex_0x_1+fx_0x_2 +gx_1^2+hx_1x_2 + jx_2^2$, where the superscript $h$ indicates the homogenization and it should not be confused with the coefficient $h \in \C$ of the monomial $x_1x_2$ in $f_2$. We denote $\overline{I} = \langle f_1^h,f_2^h \rangle \subset \C[x_0,x_1,x_2]$. Now, one can verify that $M$ is also the Macaulay matrix of $\{f_1^h,f_2^h\}$ with $$S^h(M) = \{(2,0,0),(1,1,0),(1,0,1),(0,2,0),(0,1,1),(0,0,2)\} \subset \Z^3,$$ $\Sigma_1^h(M) = \{(1,0,0),(0,1,0),(0,0,1) \}$, $\Sigma_2^h(M) = \{(0,0,0)\}$. It is clear how this can be generalized to any dense Macaulay matrix: if the associated map has image in $\I_{\leq t}$, homogenize the exponent vectors in $S(M)$ to degree $t$ in $\Ints^{n+1}$ and the exponent vectors in $\Sigma_i(M)$ to degree $t-d_i$. The associated linear map is given as follows. Denoting $R^h = \C[x_0,x_1,\ldots,x_n]$ and the degree $t$ part of a graded $R^h$-module $A$ by $A_t$ (the grading is induced by the standard grading on $R^h$), $M$ represents the map
\begin{alignat*}{2}
	 &\bigoplus_{i=1}^n R^h_{t-d_i} \longrightarrow \overline{I}_t,  \\
	 &(a_1, \ldots, a_n) \longrightarrow a_1f_1^h+ \cdots + a_nf_n^h.
	 \end{alignat*}
This map is surjective.
\end{example}
Macaulay matrices are used to give determinantal formulations of resultants \cite{cox2} and to solve systems of polynomial equations \cite{cox2,dreesen,bats}. They form a natural first step in reformulating the root finding problem as a linear algebra problem. The following theorem is straightforward \cite{dreesen}. 
\begin{theorem} \label{theo:null}
Let $S(M) = \{\alpha_1, \ldots, \alpha_l \}$ be the support of a Macaulay matrix $M$ of $\{f_1, \ldots, f_n \}$, where $\alpha_i$ corresponds to the $i$-th column of $M$. Let $\I = \langle f_1, \ldots, f_n \rangle$. The point $z \in \C^n$ satisfies $z \in \V(\I)$ if and only if the vector 
$$ v(z) = (x^{\alpha_1}(z), \ldots, x^{\alpha_l}(z))^\top$$
satisfies $Mv(z) = 0$.
\end{theorem}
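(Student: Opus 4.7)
The plan is to unpack the definition of the matrix-vector product $Mv(z)$ and observe that each entry is an evaluation of a row polynomial at the point $z$. First I would note that if the $k$-th row of $M$ encodes a polynomial $p = \sum_{\alpha \in S(M)} c_\alpha \x^\alpha$, then by the definition of $v(z)$,
$$(Mv(z))_k = \sum_{\alpha \in S(M)} c_\alpha z^\alpha = p(z).$$
Hence $Mv(z) = 0$ is equivalent to the assertion that every row polynomial of $M$ vanishes at $z$.

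From this observation the forward direction is immediate: if $z \in \V(\I)$ then $f_i(z) = 0$ for every $i$, and each row polynomial $\x^{\beta_{ij}} f_i$ evaluates to $z^{\beta_{ij}} f_i(z) = 0$.

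For the reverse direction my plan is to exhibit, within each block row of $M$, a row whose shift $\beta_{ij}$ is the zero vector, so that the row polynomial is $f_i$ itself and $Mv(z) = 0$ directly forces $f_i(z) = 0$ for each $i$. In the dense Macaulay matrix construction described above, the summand $R_{\leq t-d_i}$ contains the constant $1$ (since $t \geq d_i$), so $0 \in \Sigma_i(M)$ is automatic and no further argument is required.

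The only mild subtlety, and the single place where care is warranted, is precisely this last step: the reverse direction relies on $0 \in \Sigma_i(M)$ for every $i$. If one worked with a Macaulay-type matrix assembled from nontrivial shifts only, the conclusion could fail, since $z^{\beta_{ij}} f_i(z) = 0$ admits the spurious possibility $z^{\beta_{ij}} = 0$. For the matrices considered here this cannot happen, so the argument reduces to the two observations above.
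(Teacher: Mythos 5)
Your argument is correct, and it is the standard elementary one: interpret the entries of $Mv(z)$ as evaluations of the row polynomials at $z$, observe that the forward implication is immediate, and close the reverse implication by pointing to the row whose shift is zero. The paper does not actually supply a proof here (it declares the theorem ``straightforward'' and cites Dreesen), so there is nothing to compare line by line; you are filling a gap the paper leaves to the reader.

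The one point worth underscoring is that your flag on the reverse direction is not merely ``mild subtlety'' — it is a genuine hypothesis that the paper's statement omits. As written, Theorem~\ref{theo:null} quantifies over an arbitrary Macaulay matrix of $\{f_1,\ldots,f_n\}$, and the definition in Section~\ref{sec:macmtx} does not require $0 \in \Sigma_i(M)$. Take $n=1$, $f_1 = 1 + x$, $S(M) = \{0,1,2\}$, $\Sigma_1(M) = \{1\}$: then $M = (0\ 1\ 1)$, $v(0) = (1,0,0)^\top$, so $Mv(0)=0$ while $0 \notin \V(\I)$. So the ``only if'' direction fails without a condition such as $0 \in \Sigma_i(M)$ for all $i$ (or, more generally, that for every $i$ and every $z$ under consideration some shift $\beta_{ij}$ satisfies $z^{\beta_{ij}} \neq 0$). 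Since $t \geq \max_i d_i$ guarantees $0 \in \Sigma_i(M)$ for the dense Macaulay matrices the paper actually uses, the theorem is correct in context, but your proposal makes explicit the hypothesis that the paper's formulation leaves implicit. That is a worthwhile clarification to retain, not a caveat to suppress.
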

It is clear that Theorem \ref{theo:null} generalizes to the projective interpretation of $M$. 
\begin{theorem} \label{theo:null2}
Let $S^h(M) = \{\alpha^h_1, \ldots, \alpha^h_l \}$ be the support of a (homogeneously interpreted) Macaulay matrix $M$ of $\{f_1^h, \ldots, f_n^h \}$, where $\alpha_i^h$ corresponds to the $i$-th column of $M$. Denote $\overline{I} = \langle f_1^h,\ldots,f_n^h \rangle$. The point $z^h \in \PP^n$ satisfies $z^h \in \V(\overline{I})$ if and only if the point 
$$ v(z^h) = (x^{\alpha_1^h}(z^h), \ldots, x^{\alpha_l^h}(z^h))^\top,$$
viewed as a point in $\PP^{l-1}$, satisfies $Mv(z^h) = 0$ (note that here $x = (x_0,x_1,\ldots, x_n)$ is short for an $n+1$-tuple). This condition is well defined, since $v(\lambda z^h) = \lambda^t v(z^h), \lambda \in \C \backslash \{0\}$ and $t = |\alpha_i^h|$.
\end{theorem}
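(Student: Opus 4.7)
The plan is to read off $Mv(z^h)$ row by row using the explicit description of the Macaulay matrix. The $j$-th row of the $i$-th block encodes the polynomial $x^{\beta_{ij}^h} f_i^h$ of total degree $t$, and the columns are indexed by the degree-$t$ monomials $x^{\alpha_k^h}$. A direct unwinding shows that the corresponding entry of $Mv(z^h)$ equals $x^{\beta_{ij}^h}(z^h)\cdot f_i^h(z^h)$. Once this identification is in hand, both directions of the biconditional reduce to statements about these products.

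Before that, I would dispatch the well-definedness remark: every $\alpha_k^h\in S^h(M)$ satisfies $|\alpha_k^h|=t$, so $v(\lambda z^h)=\lambda^t v(z^h)$. Consequently $Mv(\lambda z^h)=\lambda^t Mv(z^h)$, and the vanishing condition $Mv(z^h)=0$ descends to a well-defined condition on $\PP^{l-1}$.

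For the forward direction ($z^h\in\V(\overline{I})\Rightarrow Mv(z^h)=0$) the argument is immediate: $f_i^h(z^h)=0$ for each $i$ forces every entry $x^{\beta_{ij}^h}(z^h)f_i^h(z^h)$ to vanish. For the converse I would argue contrapositively within each block. Assuming $Mv(z^h)=0$, the identity $x^{\beta_{ij}^h}(z^h)f_i^h(z^h)=0$ holds for every shift $\beta_{ij}^h\in\Sigma_i^h(M)$; to conclude $f_i^h(z^h)=0$ it suffices to exhibit, for each $i$, some $\beta\in\Sigma_i^h(M)$ with $x^{\beta}(z^h)\neq 0$. This is where the specific dense construction from Example \ref{hommac} is used: $\Sigma_i^h(M)$ consists of \emph{all} monomials of degree $t-d_i$ in $x_0,\ldots,x_n$. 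Since $z^h\in\PP^n$, at least one coordinate $z_k^h$ is nonzero, so the shift $\beta=(t-d_i)\,e_k$ lies in $\Sigma_i^h(M)$ and evaluates to $(z_k^h)^{t-d_i}\neq 0$. Hence $f_i^h(z^h)=0$ for each $i$, i.e.\ $z^h\in\V(\overline{I})$.

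The only delicate point — and therefore the main obstacle — is the converse direction: it truly depends on the shift sets $\Sigma_i^h(M)$ being complete in the sense above, so the argument hinges on the homogeneous interpretation supplied in Example \ref{hommac}. If one trimmed rows arbitrarily one could have a point $z^h$ at which every included shift vanishes while $f_i^h(z^h)\neq 0$, breaking the implication; thus the proof must invoke the surjective map $\bigoplus_i R^h_{t-d_i}\rightarrow\overline{I}_t$ underlying $M$ to guarantee the required non-vanishing shift.
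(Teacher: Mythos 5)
Your proof is correct and is exactly the row-by-row unwinding that the paper treats as immediate (the paper offers no proof, simply asserting that Theorem \ref{theo:null} ``generalizes'' to the projective setting). Your additional observation that the converse genuinely requires each shift set $\Sigma_i^h(M)$ to contain a monomial not vanishing at $z^h$ --- guaranteed here because the dense construction includes all of $R^h_{t-d_i}$, e.g.\ the pure power $(t-d_i)e_k$ for any nonzero coordinate $z_k^h$ --- is a correct and worthwhile precision that the paper leaves implicit.
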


Theorem \ref{theo:null2} implies that every point $z^h \in \V(\overline{I}) \subset \PP^n$ generates a direction $v(z^h)$ in the nullspace of $M$. We will now present a way to construct the dense Macaulay matrix such that its null space is spanned by these directions.
In the Macaulay matrix $M$ with support $S(M) = \{\alpha \in \Ints^n : |\alpha| \leq t \}$, the number of columns is $$\#S(M) =  \begin{pmatrix} t+n \\ n \end{pmatrix}.$$
Consider the shifts
$$ \Sigma_i(M) = \{ \beta \in \Ints^n : |\beta| \leq t-d_i \}.$$
It is clear that the resulting matrix $M$ is the dense Macaulay matrix of degree $t$. 
\begin{theorem} \label{theo:nullM}
Under our genericity assumptions, for $M$ constructed as above with $t \geq \sum_{i=1}^n d_i - n$, we have $\dim \nulll(M) = N$. Equivalently, for these values of $t$: $\#S(M) - N = \rank(M)$.
\end{theorem}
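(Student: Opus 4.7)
The plan is to identify $\rank(M)$ with the dimension of a graded piece of the homogeneous ideal $\overline{I}=\langle f_1^h,\ldots,f_n^h\rangle$, and then invoke the Hilbert series of a complete intersection.

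I would first observe that the rows of $M$ are the coefficient vectors (in the monomial basis of $R_{\leq t}$) of the polynomials $x^\beta f_i$ with $|\beta|\leq t-d_i$, so $\rank(M) = \dim_\C I^{(t)}$, where $I^{(t)} := \sum_{i=1}^n R_{\leq t-d_i}\cdot f_i \subseteq I_{\leq t}$, and hence $\dim \nulll(M) = \#S(M) - \rank(M) = \dim R_{\leq t}/I^{(t)}$. Next I would pass to the homogeneous setting via the $\C$-linear isomorphism $\phi_t: R_{\leq t}\to R^h_t$ defined by $\phi_t(g) = x_0^t\, g(x_1/x_0,\ldots,x_n/x_0)$, which sends $x^\alpha$ to $x_0^{t-|\alpha|}x^\alpha$ and satisfies $\phi_t(hf_i) = \phi_{t-d_i}(h)\cdot f_i^h$ for $h \in R_{\leq t-d_i}$. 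This identifies $I^{(t)}$ with $\sum_i R^h_{t-d_i}\cdot f_i^h = \overline{I}_t$, so $\dim \nulll(M) = \dim R^h_t/\overline{I}_t = H(t)$, the Hilbert function of $R^h/\overline{I}$ at $t$.

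It then remains to show that $H(t) = N$ for $t \geq \sum d_i - n$. Under the genericity assumptions, $\V(\overline{I})\subset\PP^n$ is finite, so $R^h/\overline{I}$ has Krull dimension $1$ and the ideal $\overline{I}$ has codimension $n$ in the Cohen--Macaulay ring $R^h$. Since it is generated by exactly $n$ elements, $f_1^h,\ldots,f_n^h$ forms a regular sequence, and the standard Hilbert series of a graded complete intersection gives
$$\sum_{k\geq 0}H(k)\,z^k \;=\; \frac{\prod_{i=1}^n(1-z^{d_i})}{(1-z)^{n+1}} \;=\; \frac{P(z)}{1-z},$$
where $P(z)=\prod_{i=1}^n(1+z+\cdots+z^{d_i-1})$ has degree $D=\sum_i(d_i-1)=\sum d_i-n$ and $P(1)=\prod d_i=N$. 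Writing $P(z)=\sum_{j=0}^{D}p_j z^j$ and expanding the geometric series gives $H(t)=\sum_{j=0}^{\min(t,D)}p_j$, which stabilizes to $P(1)=N$ exactly when $t\geq D$.

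The main obstacle is this last step: verifying that $\overline{I}$ genuinely has codimension $n$, not merely defines a variety of the expected dimension set-theoretically, so that Cohen--Macaulayness of $R^h$ promotes $f_1^h,\ldots,f_n^h$ from a generating set to a regular sequence and the complete-intersection Hilbert series applies. Once this is secured the computation of $H(t)$ is routine, and in fact shows $H(t) < N$ for $t < \sum d_i - n$, so the threshold in the statement is sharp.
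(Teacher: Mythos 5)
Your proof is correct and complete. The paper does not actually prove this theorem --- the ``proof'' is a citation to Macaulay together with a pointer to the notion of degree of regularity --- although the paragraph immediately following the theorem in the paper sketches the same Hilbert-function interpretation that you make rigorous. Your identification of $\dim\nulll(M)$ with the Hilbert function $H(t)$ of $R^h/\overline{I}$ via the degree-$t$ homogenization isomorphism $\phi_t$ is exactly the reduction the paper gestures at, and the step you single out as the main obstacle is handled properly: since $\V(\overline{I})\subset\PP^n$ is finite, the affine cone in $\C^{n+1}$ has dimension $1$, so $\overline{I}$ has height $n$ in the Cohen--Macaulay ring $R^h=\C[x_0,\ldots,x_n]$; and an ideal of height $c$ generated by $c$ elements in a Cohen--Macaulay ring is automatically generated by a regular sequence, so the Koszul resolution yields the claimed Hilbert series $\prod_i(1-z^{d_i})/(1-z)^{n+1}$. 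The remaining computation is routine, and as a bonus it shows the bound $t\geq\sum_i d_i-n$ is sharp (since the $p_j$ are nonnegative with $p_D=1$), which the paper leaves implicit. In short, you have supplied the standard modern proof of the fact the paper cites without argument.
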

\begin{proof}
This result was known by Macaulay \cite{macaulay1994algebraic}. The degree $t = \sum_{i=1}^n d_i - n$ is called the \textit{degree of regularity} in \cite{bats,dreesen}.
\end{proof}
In fact, we have by construction that for the Macaulay matrix of degree $t$, $\dim \nulll(M)$ is the codimension of $\overline{I}_t$ in $R^h_t$, which is the dimension of $(R^h/\overline{I})_t$ as a $\C$-vector space. This is the Hilbert function of $\overline{I}$ evaluated at $t$ \cite{cox2,eisenbud_geometry_2005}. Since $\overline{I}$ defines points in $\PP^n$ by assumption, the Hilbert function becomes constant for large $t$ and Theorem \ref{theo:nullM} implies that this happens at $t = \sum_{i=1}^n d_i - n$. In the algorithm, we will also rely on the following theorem.
\begin{theorem} \label{theo:nbbasiselts}
For $t \geq \sum_{i=1}^n d_i - (n-1)$ we have that 
$$ \begin{pmatrix} t-1+n \\ n \end{pmatrix} \geq N,$$
with $N = \prod_{i=1}^n d_i$.
\end{theorem}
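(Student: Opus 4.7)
The plan is to reduce to the extremal value of $t$ and then prove a clean combinatorial inequality $\binom{\sum d_i}{n} \geq \prod d_i$.

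First I would observe that the map $t \mapsto \binom{t-1+n}{n}$ is non-decreasing for $t \geq 1$: the ratio of consecutive values is $(t+n)/t \geq 1$. Since each $d_i \geq 1$ (the $f_i$ are non-constant), we have $s := \sum_{i=1}^n d_i \geq n$, so the binomial coefficient is well defined and monotone in its upper entry throughout the range we care about. Setting $t_0 = s - (n-1)$, we have $t_0 - 1 + n = s$, so it suffices to establish the inequality at $t = t_0$, namely
\[
\binom{s}{n} \;\geq\; \prod_{i=1}^n d_i.
\]

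For this step I would use a double-counting argument. Take a universe $U$ of $s = \sum d_i$ distinguishable elements, partitioned into $n$ disjoint blocks $B_1,\ldots,B_n$ of sizes $d_1,\ldots,d_n$ respectively. On the one hand, the total number of $n$-element subsets of $U$ is $\binom{s}{n}$. On the other hand, the subsets that pick exactly one element from each block (transversals) are in bijection with $B_1 \times \cdots \times B_n$ and are therefore counted by $\prod_{i=1}^n d_i$. Since every transversal is, in particular, an $n$-element subset of $U$, we obtain $\prod d_i \leq \binom{s}{n}$, which is what was needed.

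Combining the two steps, for every $t \geq s - (n-1)$ we get
\[
\binom{t-1+n}{n} \;\geq\; \binom{s}{n} \;\geq\; \prod_{i=1}^n d_i \;=\; N,
\]
as claimed. The only mild obstacle is the monotonicity observation at the boundary: one must confirm that $s \geq n$ so that the binomial coefficients are comparable on the increasing side of Pascal's triangle, but this is immediate from $d_i \geq 1$. An alternative purely algebraic route (expanding $\binom{s}{n}$ as $s(s-1)\cdots(s-n+1)/n!$ and comparing term-by-term or by induction on $n$) is available but noticeably messier; the combinatorial double count is essentially a one-line proof once the reduction is in place.
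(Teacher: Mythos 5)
Your proof is correct. The argument differs in presentation from the paper's, though the combinatorial substance is closely related. The paper interprets $\binom{t-1+n}{n}$ as the number of monomials $\x^\alpha$ of total degree at most $t-1$, and observes that the box $\{\alpha : 0 \leq \alpha_i \leq d_i - 1\}$ (which has $N = \prod d_i$ lattice points) is contained in that degree-truncated simplex, since its top-degree element has degree $\sum d_i - n \leq t-1$. Your proof instead reduces to the boundary case $t_0 = \sum d_i - (n-1)$ by monotonicity of $t \mapsto \binom{t-1+n}{n}$, then double-counts: transversals of a partition of an $s$-element set into blocks of sizes $d_1,\ldots,d_n$ form a subset of all $n$-element subsets. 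The two counting arguments are linked by the stars-and-bars correspondence between $n$-subsets of $[s]$ and multi-indices, but yours is expressed in a setting-free combinatorial language, whereas the paper's is stated directly in the monomial language that is native to the rest of the argument (and, by absorbing $t$ into the simplex, implicitly sidesteps the monotonicity reduction you made explicit). Both are clean one-containment proofs; yours is complete and rigorous as written.
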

\begin{proof}
The number $ \begin{pmatrix} t-1+n \\ n \end{pmatrix}$ is the number of monomials of degree at most $t-1 = \sum_{i=1}^n d_i -n$. The number $N = \prod_{i=1}^n d_i$ is the number of monomials in the set $\{ \alpha \in \Ints^n : \alpha_i \leq d_i-1, i=1,\ldots, n \}$. The highest degree monomial in this set has degree $\sum_{i=1}^n d_i -n$. 
\end{proof}
It will become clear later that the properties of $M$ given in Theorem \ref{theo:nullM} and Theorem \ref{theo:nbbasiselts} are exactly the properties we need in our algorithm. We also want $M$ to be as small as possible to reduce memory use and computational effort. We therefore set $t = \sum_{i=1}^n d_i -(n-1)$.
\section{Normal form computation using the Macaulay matrix} \label{sec:normalform}
In this section, we propose a new normal form algorithm for computing the $m_{x_i}$ for a generic dense system as described in Section \ref{sec:genericsystems}.
\subsection{An example}
We introduce the ideas of our algorithm by a simple example. Consider the ideal $\I = \langle f_1, f_2 \rangle \subset \C[x_1,x_2]$ given by $f_1(x_1,x_2) = x_1^2+x_2^2 -2 = 0,f_2(x_1,x_2) = 3x_1^2 -x_2^2 -2 = 0$.
We will use linear combinations of $f_1,xf_1,yf_1,f_2,xf_2,yf_2$ to find the normal forms. The variety $\V(\I) = \{(-1,-1),(-1,1),(1,-1),(1,1)\}$ is 0-dimensional and the system satisfies the genericity assumptions. A possible basis for $\C[x_1,x_2]/\I$ is $\OO = \{[1], [x_1],[x_2],[x_1x_2]\}$. We construct the dense Macaulay matrix $M$ of degree $t = \sum_{i=1}^2d_i - (n-1) = 3$ as presented in Section \ref{sec:macmtx}, ordering the columns such that these monomials correspond to the last four columns: 
\[
  M=\kbordermatrix{%
      &  x_1^3 & x_1^2x_2 & x_1x_2^2 & x_2^3 &x_1^2 & x_2^2 && \underline{1}  & \underline{x_1} & \underline{x_2} &  \underline{x_1x_2}   \\
    f_1 &   &  &  &  & 1  & 1 &\vrule& -2   \\
    x_1f_1 & 1 &  & 1 &      &  &&\vrule& & -2 &  &  \\
    x_2f_1 &  & 1  &  & 1    &   &&\vrule&   &  &-2&  \\
    f_2 &   &  &  &&  3  & -1 &\vrule&-2 \\
    x_1f_2 & 3  &  & -1 &   & &   &\vrule& & -2 &  &  \\
    x_2f_2 &  & 3  &  & -1   &   & &\vrule&   &  & -2 & 
  }.
\]
To construct the multiplication maps $m_{x_1}$ and $m_{x_2}$ with respect to $\OO$, we need to calculate the normal forms of $x_1^2, x_1^2x_2, x_2^2, x_1x_2^2$ in $\OO$. All of these monomials appear in the left block column of $M$. Inverting this column block and applying it from the left to $M$ gives
\[
  \tilde{M}=\kbordermatrix{%
      &  x_1^3 & x_1^2x_2 & x_1x_2^2 & x_2^3 &x_1^2 & x_2^2 && \underline{1}  & \underline{x_1} & \underline{x_2} &  \underline{x_1x_2}   \\
     x_1^3-x_1 & 1 &  &  &  &  &  &\vrule& & -1  &  &\\
     x_1^2x_2-x_2&   & 1&  &  &  &  &\vrule&&  & -1 &  \\
     x_1x_2^2-x_1 &   &  & 1&  &  &  &\vrule&  &-1&  &  \\
     x_2^3-x_2 &   &  &  & 1&  &  &\vrule&  &  &-1&\\
     x_1^2-1&   &  &  &  &1 &  &\vrule& -1 &&  &  \\
     x_2^2-1&   &  &  &  &  &1 &\vrule& -1 &  && 
  }.
\]
Note that the left block was square because of the properties of the dense Macaulay matrix. The rows of $\tilde{M}$ are linear combinations of the rows of $M$, so they represent polynomials in $\I$. Hence, for example, $[x_1^2 - 1] = [0]$ modulo $\I$ and the normal form of $x_1^2$ is $1$. Using the information in $\tilde{M}$ we can construct $m_{x_1}$ and $m_{x_2}$. This gives
\[
  m_{x_1}=\kbordermatrix{%
   &{[x_1]\cdot[1]}&{[x_1]\cdot[x_1]}&{[x_1]\cdot[x_2]}&{[x_1]\cdot[x_1x_2]} \\
    {[1]} & 0  & 1  & 0 & 0 \\
    {[x_1]} & 1 & 0 &  0  & 0\\
    {[x_2]} & 0  & 0  & 0  & 1\\
    {[x_1x_2]} & 0  & 0 & 1 & 0\\
  },
\]
\[
  m_{x_2}=\kbordermatrix{%
   &{[x_2]\cdot[1]}&{[x_2]\cdot[x_1]}&{[x_2]\cdot[x_2]}&{[x_2]\cdot[x_1x_2]} \\
    {[1]} & 0  & 0  & 1 & 0 \\
    {[x_1]} & 0 & 0 &  0  & 1\\
    {[x_2]} & 1  & 0  & 0  & 0\\
    {[x_1x_2]} & 0  & 1 & 0 & 0\\
  }.
\]
Note that the first and the third column of $m_{x_1}$ are trivial and so are the first and the second column of $m_{x_2}$. The other columns can be read off $\tilde{M}$ directly. The eigenvalues of $m_{x_i}$ coincide with the $i$-th coordinates of the points in $\V(\I)$. 

\subsection{The monomial basis}
When choosing the basis $\OO$, we must take into account that $\OO$ cannot contain monomials of degree $t$ (3 in the previous example). Otherwise, multiplying with $x_1$ or $x_2$ gives a monomial that is not in $S(M)$. Secondly, it must be such that the resulting system is solvable. In the generic case, there is always such a choice. We consider the Macaulay matrix $M$ of degree $t = \sum_{i=1}^n d_i - (n-1)$. By $S(M)_t$ we denote the monomials in $S(M)$ of degree $t = \sum_{i=1}^n d_i - n + 1$ and by $S(M)_{<t}$ the remaining monomials. We order the columns of the Macaulay matrix in such a way that 
$$M = \begin{bmatrix} M_b & M_i & B \end{bmatrix}$$
where $M_b$ are the columns corresponding to $S(M)_t$, $B$ contains the columns corresponding to $\OO$ and $M_i$ corresponds to $S(M)_{< t} \backslash \OO$. When the polynomials $f_1, \ldots, f_n$ are generic, the set of monomials
\begin{equation} \label{blockbasis}
\OO_M = \{x_1^{\alpha_1}x_2^{\alpha_2}\cdots x_n^{\alpha_n} : 0\leq \alpha_i \leq d_i - 1, 1\leq i \leq n \} \subset S(M)_{\leq t}
\end{equation}
is a basis for $\C[\x] /\I$ \cite{cox2} (this is exactly the basis used in the Macaulay resultant construction, as introduced in Section \ref{sec:motivation}). This means that every monomial in $S(M) \backslash \OO_M$ has a unique normal form in $\OO_M$. In other words, there is a unique polynomial of the form 
\begin{equation} \label{eq:NF}
g_\alpha = x^\alpha - \sum_{b \in \OO_M} c_{\alpha b} b \in \I
\end{equation}
with $c_{\alpha b} \in \C$, for each $\alpha \in S(M) \backslash \OO_M$. Also,  it follows from Property (iii) in \cite[Chapter 1, p.46]{cattani2005solving}  and from our assumptions that for $t \geq \sum_{i=1}^n d_i-(n-1)$, the rows of $M$ span $I_{\leq t}$ linearly. Now, since $g_\alpha \in I_{\leq t}$ and the rows of $M$ span $I_{\leq t}$, every $g_\alpha$ is a linear combination of the rows of $M$. The number of polynomials $g_\alpha$ is $r = \rank(M)$, so we can apply a square matrix to the left of $M$ to transform $M$ into 
\begin{equation} \label{canonform}
\begin{bmatrix}
1 &  &  &  & -c_{\alpha_1 b_1} & \cdots & -c_{\alpha_1 b_N}\\
  & 1&  &  & -c_{\alpha_2 b_1} & \cdots & -c_{\alpha_2 b_N}\\
  &  &\ddots&  & \vdots &  & \vdots \\
  &  &  & 1& -c_{\alpha_r b_1} & \cdots& -c_{\alpha_r b_N} \\ \hline 
0 &0 &\cdots& 0 &0 & \cdots& 0   \\
\vdots &&&&&& \vdots\\
0 &0 &\cdots& 0 &0 & \cdots& 0   \\
\end{bmatrix}.
\end{equation}
The block row of zeros is introduced by the syzygies in the rows of $M$\footnote{This occurs only for $n \geq 3$, not in the example given here.} \cite{eisenbud_geometry_2005}. This proves the following theorem. 
\begin{theorem} 
The matrix $M_b$ is of full column rank under our assumptions, and there is at least one possible choice of $\OO$ for which $\begin{bmatrix} M_b & M_i \end{bmatrix}$ is of rank $r$.
\end{theorem}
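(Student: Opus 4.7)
The plan is to leverage the polynomials $g_\alpha$ from equation (\ref{eq:NF}) to build an auxiliary matrix $G$ whose row space coincides with that of $M$ but whose block structure makes the relevant column ranks transparent, and then transfer rank statements between the two matrices using a coordinate-projection argument.

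First, I would argue that $\{g_\alpha : \alpha \in S(M)\setminus \OO_M\}$ is a basis of $\I_{\leq t}$. There are exactly $\#S(M)-N = r$ such polynomials by Theorem \ref{theo:nullM}, they lie in $\I_{\leq t}$ by construction, and they are linearly independent since each carries a distinct ``leading'' monomial $\x^\alpha \notin \OO_M$. Since the rows of $M$ span $\I_{\leq t}$ and $\dim \I_{\leq t} = \rank(M) = r$, the $g_\alpha$ form a basis of $\I_{\leq t}$.

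Next, I would arrange the $g_\alpha$ as the rows of an $r \times \#S(M)$ matrix $G$ whose columns are ordered as $S(M)_t$, then $S(M)_{<t}\setminus \OO_M$, then $\OO_M$. The key structural observation is that every $b \in \OO_M$ has degree at most $\sum_{i=1}^n(d_i - 1) = t - 1$, so whenever $\alpha \in S(M)_{<t}\setminus \OO_M$ the polynomial $g_\alpha$ contains no degree-$t$ monomial at all. Combined with the fact that for $\alpha \in S(M)_t$ the polynomial $g_\alpha$ contributes a single $+1$ in column $\x^\alpha$ of the $S(M)_t$-block (and zeros in the other $S(M)_t$ columns), the restriction of $G$ to the columns indexed by $S(M)_t$ is an identity block stacked above a zero block, and the restriction of $G$ to the columns indexed by $S(M)\setminus \OO_M$ is the full $r \times r$ identity.

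Finally, I would close with the elementary observation that if two matrices $A$ and $B$ share the same row space $V \subset \C^{\#S(M)}$, then for any subset of columns $J$ the ranks of $A_J$ and $B_J$ both equal $\dim \pi_J(V)$, where $\pi_J$ denotes the coordinate projection. Applied to $M$ and $G$ with $J = S(M)_t$, this yields $\rank(M_b) = |S(M)_t|$, which is the full column rank claim. Choosing $\OO = \OO_M$ and applying the same argument with $J = S(M)\setminus \OO_M$ yields $\rank([M_b \; M_i]) = r$, the second claim. The main obstacle, as I see it, is essentially bookkeeping: one must keep straight why every $g_\alpha$ with $\alpha$ of degree $<t$ stays in degree $<t$ (which hinges on the explicit shape of $\OO_M$), and formulate the row-space-to-column-rank transfer cleanly enough that both parts of the theorem fall out uniformly. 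No deep new input is needed beyond the facts already gathered in the paper.
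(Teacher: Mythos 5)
Your proof is correct and follows essentially the same route as the paper: both arguments hinge on writing down the polynomials $g_\alpha$ for $\alpha \in S(M)\setminus\OO_M$, observing that they span the same space as the rows of $M$ (namely $\I_{\leq t}$), and reading off the identity-block structure in the columns indexed by $S(M)\setminus\OO_M$, with the degree bound $\deg b \leq t-1$ for $b\in\OO_M$ doing the real work. The paper phrases this as a row reduction of $M$ into the canonical form \eqref{canonform}, whereas you build the auxiliary matrix $G$ and invoke a projection lemma to transfer column ranks; this is a cleaner bookkeeping device but the underlying content is identical.
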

However, there are many more choices for $\OO$ than the `block basis' from \eqref{blockbasis}. From a numerical point of view, it turns out this is crucial to find the normal forms with high accuracy. The idea is simple: we choose $\OO$ in such a way that $\begin{bmatrix} M_b & M_i \end{bmatrix}$ is `as invertible as possible', i.e., it has a small condition number. \\
 
\subsection{Algorithm}
We propose to make the choice of basis $\OO$ by using a QR factorization with optimal column pivoting on (part of) the Macaulay matrix. This is a well known numerical linear algebra algorithm to compute an upper triangularization of a column permuted version of a matrix such that the diagonal elements are, heuristically, as large as possible (in absolute value). See for instance \cite{golub1965numerical}. This leads to Algorithm \ref{alg:dense} for the generic dense case. 
\begin{algorithm}
\caption{Multiplication maps of a dense system}\label{alg:dense}
\begin{algorithmic}[1]
\Procedure{MultMatrices}{$f_1,\ldots,f_n$}
\State $M \gets \textup{dense Macaulay matrix of degree } \sum_{i=1}^n d_i - (n-1)$ \label{constmac}
\State $M \gets \begin{bmatrix} M_b & M_*\end{bmatrix} = MP_c$ \label{permcols}
\State $M_b = Q_b R_b$ \label{qrborder}
\State $M \gets Q_b^* M = \begin{bmatrix} \hat{R}_b & Z \\ 0 & \hat{M}_*  \end{bmatrix}$ \label{triangborder}
\State $ \hat{M}_* P_i = Q_i R_i$ \label{qrinterior}
\State $M \gets M \begin{bmatrix} \mathbb{I} & 0 \\ 0 & P_i \end{bmatrix} $ \label{perminterior}
\State $M \gets \begin{bmatrix} \mathbb{I} & \\ & Q_i^* \end{bmatrix} M = \begin{bmatrix} \hat{R}_b & Z P_i \\ 0 & \tilde{R}_i \\ 0& 0 \end{bmatrix} = \begin{bmatrix} \hat{R}_b & \hat{Z}_1 & \hat{Z}_2 \\ 0 & \hat{R}_i & \hat{Z}_3 \\ 0 & 0 & 0 \end{bmatrix}$ \label{triangrest}
\State $M \gets \begin{bmatrix} \hat{R}_b & \hat{Z}_1 & \hat{Z}_2 \\ 0 & \hat{R}_i & \hat{Z}_3 \end{bmatrix}$  \label{dropsyzs}
\State $C \gets -\begin{bmatrix} \hat{R}_b & \hat{Z}_1 \\ 0 & \hat{R}_i \end{bmatrix}^{-1} \begin{bmatrix} \hat{Z}_2 \\ \hat{Z}_3 \end{bmatrix}$ \label{solve}
\For{$i=1,\ldots,n$}
\State $\textup{Construct $m_{x_i}$ using the normal forms in $C$.}$
\EndFor
\State \textbf{return} $m_{x_1},\ldots, m_{x_n}$
\EndProcedure
\end{algorithmic}
\end{algorithm}
We briefly go through the different steps of the algorithm.
\begin{itemize}
\item Step \ref{constmac} is obvious. In step \ref{permcols}, we re-arrange the columns of $M$ such that $M_b$ contains the columns corresponding to $S(M)_t$ and $M_*$ contains all of the other columns. The order within the block columns is of no importance. We represented this in Algorithm \ref{alg:dense} by a column permutation matrix $P_c$. At this point, we do not split $M_*$ into $M_i$ and $B$ as before. The actual choice of basis is made in step \ref{qrinterior}.
\item In step \ref{qrborder}, we compute a QR factorization of the border block $M_b$, to make this block column upper triangular in step \ref{triangborder}. The matrix $\hat{R}_b$ is the square upper triangular part of $R_b$, it is a nonsingular matrix. At this point, the lower block row represents polynomials in $\I$ of degree $\leq t-1$.
\item Step \ref{qrinterior} is essential. We perform a QR factorization \textit{with optimal column pivoting} to the full lower right block. That is, we do not compute a QR factorization of $\hat{M}_*$, but of a column permuted version $\hat{M}_*P_i$, where $P_i$ is a column permutation matrix. The column permutation is such that it heuristically selects the `linearly most independent' columns first. In step \ref{perminterior} we apply the corresponding permutation to the entire matrix $M$ and in step \ref{triangrest} we make the entire matrix upper triangular ($\tilde{R}_i$ is the upper non-zero block and $\hat{R}_i$ is the square upper triangular part of $\tilde{R}_i$). We split the right block column into two block columns such that $\hat{R}_i$ is square. Under our assumptions, $\hat{R}_i$ is of full rank. Note that in the result, columns are still associated to monomials and the rows are polynomials in $\I$. With increasing row index, the support of these polynomials is contained in a shrinking subset of $S(M)$. Note that in this step, the syzygies introduce a block row of zeros in $M$. We drop this block row of zeros in step \ref{dropsyzs}. Denoting $r = \rank(M)$, the remaining matrix $M$ is of size $ r \times (N+r)$ by the results of Section \ref{sec:macmtx}.
\item In step \ref{solve}, we take out the leftmost $r \times r$ upper triangular block and apply its inverse to the right most $r \times N$ part with opposite sign to find the normal forms of all the monomials corresponding to the first $r$ columns. Of course, we do not calculate the inverse, but apply backsubstitution instead. It is the condition number of this inversion that is controlled by the optimal column pivoting in step \ref{qrinterior}.
\end{itemize}

\subsection{Connection with resultants and border bases}
To show the relation with the Macaulay resultant construction we assume that $\OO_M$ from \eqref{blockbasis} is a basis for $\C[x]/I$. Note that the row space of $M$ is equal to the row space of the second block row of $M_0$, denoted by $[M_{10} ~ M_{11}]$ in Section \ref{sec:motivation}. It is isomorphic to the subvector space $\overline{I}_t$ (the degree $t$ part) of $\overline{I}$. The resultant construction uses monomial multiples of the input equations that generically generate this subspace. Our construction uses all of the possible monomial multiples, which leads to more computational effort since we need to perform a row compression (step \ref{triangrest}), but we observe numerically that the computed basis of the row space after this compression (first two block rows of $M$ in step \ref{triangrest}) has larger singular values. \\

Suppose that in steps \ref{qrinterior}, \ref{perminterior} we choose the basis $\OO_M$ from \eqref{blockbasis} instead of performing the QR factorization with pivoting. We can apply an invertible transformation to $M$ in step \ref{dropsyzs} on the left such that it becomes equal to $[M_{10} ~ M_{11}]$ up to column permutation. In fact, by construction, $M_{11}$ corresponds to the square, invertible, upper triangular part $\begin{bmatrix} \hat{R}_b & \hat{Z}_1 \\ 0 & \hat{R}_i \end{bmatrix}$ of $M$, since the basis monomials correspond to $M_{10}$ and $\begin{bmatrix}
\hat{Z}_2 \\ \hat{Z}_3
\end{bmatrix}$. Choosing another basis $\OO$ yields another matrix $M_{11}$. As long as it remains invertible and the multiples $\OO \cdot f_0$ are supported in $S(M)$, $m_{f_0}$ can be computed as the Schur complement given in Section \ref{sec:motivation} by concatenating the shifts of $f_0$ (corresponding to $[M_{00} ~ M_{01}]$) to $M$ in step \ref{dropsyzs} and rearranging the blocks such that the lower right one is occupied by $\begin{bmatrix} \hat{R}_b & \hat{Z}_1 \\ 0 & \hat{R}_i \end{bmatrix}$. In fact, for any column permutation that gives a full rank $\begin{bmatrix} \hat{R}_b & \hat{Z}_1 \\ 0 & \hat{R}_i \end{bmatrix}$, we can bring $M$ into the form \eqref{canonform} and it is clear that
$$\C^{\#S(M)} \simeq R^h_t = \overline{I}_t \oplus \Span(\OO^h)$$
with $R^h = \C[x_0, x_1, \ldots, x_n]$ and $\OO^h$ contains the homogenized monomials in $\OO$ (they are homogenized to degree $t$). Also, $R^h_t = \overline{I}_t \oplus (R^h/\overline{I})_t$ so 
$$\Span(\OO) \simeq \Span(\OO^h) \simeq \Span(\OO_M) \simeq (R/\overline{I})_t \simeq \C[\x]/I.$$ These are isomorphisms of $\C$-vector spaces. The first one is given by homogenization and the last isomorphism follows from the genericity assumptions and the fact that the Hilbert polynomial stabilizes at regularity \cite{eisenbud_geometry_2005}. The isomorphism $\Span(\OO) \simeq \Span(\OO_M)$ is given by an invertible `change of basis' matrix constructed as follows. The basis transformation $\OO_M \rightarrow \OO$ is a matrix with columns equal to the normal forms of $\OO_M$ in $\OO$. Call this matrix $T$. Then the multiplication maps $m_{x_i}$ in $\OO$ give multiplication maps $m_{x_i}'$ in $\OO_M$, given by $m_{x_i}' = T^{-1}m_{x_i}T$. This transformation makes $\Span(\OO) \simeq \Span(\OO_M) \simeq \C[x]/I$ isomorphisms of $\C$-algebras.\\

To show the connection with border bases, we will also work with $\OO_M$ for simplicity. Any other border basis will do. Suppose we choose the basis $\OO_M$ in steps \ref{qrinterior}, \ref{perminterior} to compute $m_{x_1}', \ldots, m_{x_n}'$ in this basis. Note that this basis is connected to 1. Set $B = \Span(\OO_M)$ and consider the $\C$-linear map 
\begin{alignat*}{2}
	 N:~& B \cup \left(\bigcup_{i=1}^n x_i \cdot B \right) \longrightarrow B,  \\
	 &b \longmapsto \begin{cases}
	 b & b \in B \\
	 m_b'(1) & b \notin B
	 \end{cases}
	 \end{alignat*}
where $m_b' = b(m_{x_1}', \ldots, m_{x_n}')$. We show that $I = \langle \ker(N) \rangle$. Since $m_{x_i}'$ represents multiplication with $x_i$ modulo $I$, $m_b'$ represents multiplication with $b$ modulo $I$ and we have that $\ker(N) \subset I$ and hence $\langle \ker(N) \rangle \subset I$. Let $K = (\bigcup_{i=1}^n x_i \cdot \OO_M) \backslash \OO_M$ and let $g_\alpha$ be the polynomial \eqref{eq:NF} for every $x^\alpha \in K$. Note that $\ker(N) = \Span(g_\alpha, \alpha \in K) \subset I$. Any polynomial $f \in \C[x]$ can be written as $f = \sum_{\alpha \in K} c_\alpha g_\alpha + \overline{f}^{\OO_M}$ with $c_\alpha \in \C[x]$ using a division algorithm as described in Chapter 4 of \cite{cattani2005solving}. Since $\OO_M$ is a (border) basis for $\C[x]/I$, $\overline{f}^{\OO_M} = 0$ when $f \in I$. Therefore $I \subset \langle g_\alpha, \alpha \in K \rangle = \langle \ker(N) \rangle \subset I$ so $I = \langle g_\alpha, \alpha \in K \rangle = \langle \ker(N) \rangle$. It follows from Theorem \ref{thm:BB} that $M_i(b) = N(x_i b) = m_{x_i}'(b)$ represents multiplication with $x_i$ in the basis $\OO_M$ for $\C[x]/I$ and the $m_{x_i}'$ commute. For any other basis $\OO$ with transformation matrix $T : \OO_M \rightarrow \OO$, commutativity of the resulting $m_{x_i}$ follows from the relation $m_{x_i} = T m_{x_i}' T^{-1}$. 
\section{Numerical experiments} \label{sec:numexp}
In this section, we use Algorithm \ref{alg:dense} for some numerical experiments and compare it to Bertini \cite{bates2013numerically,bates2016bertinilab} and PHClab \cite{guan2008phclab}. 

\subsection{Evaluating a polynomial function on $\V(\I)$}
Theorem \ref{theo:evals} implies that we can evaluate a function $f \in \C[\x]$ on $\V(\I)$ by calculating the eigenvalues of $m_f= f(m_{x_1}, \ldots, m_{x_n})$. Note that this expression for $m_f$ is well defined because of the commutativity of the $m_{x_i}$. Algorithm \ref{alg:dense} can be used if $\I$ satisfies the assumptions made in this paper. As a test of correctness, we have evaluated the quadric $f(x_1,x_2) = -(x_1^2+x^2_2) + 0.1 xy + 15$ on the variety defined by two bivariate polynomials of degree 7 and 6, shown in Figure \ref{fevalpicture}. For the computed multiplication matrices, we compute 
$$ \frac{\lVert m_{x_1}m_{x_2} - m_{x_2}m_{x_1} \rVert_2}{\lVert m_{x_1}m_{x_2} \rVert_2} = 5.5552 \cdot 10^{-13}.$$
This shows that the multiplication matrices commute (up to 13 digits of accuracy).

\begin{figure}
\input{fevalflat.tex}
\includegraphics[width= 8cm]{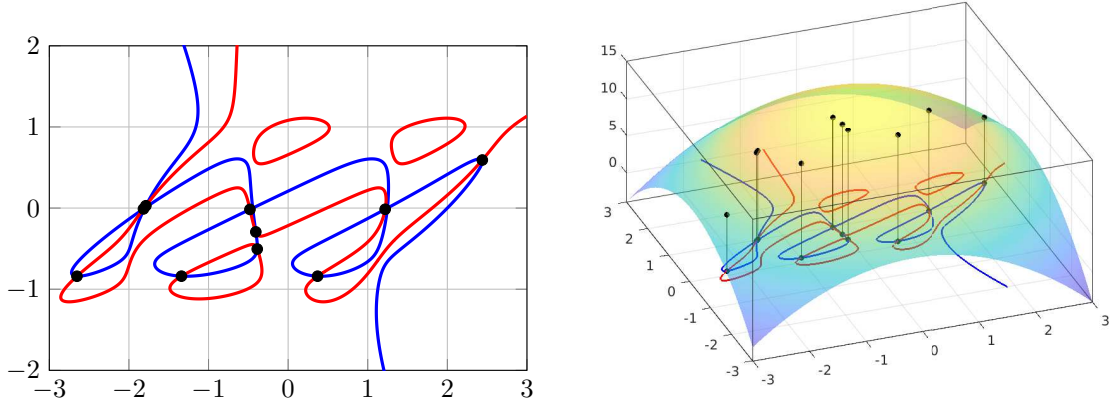}
\caption{Left: zero level lines in $\R^2$ of two bivariate polynomials of degree 7 (\ref{blue}) and 6 (\ref{red}) together with the real solutions (\ref{sols}). Right: The surface $f(x_1,x_2) = -(x_1^2+x^2_2) + 0.1 xy + 15$ and the real eigenvalues of $m_f$.}
\label{fevalpicture}
\end{figure}
\subsection{Solving generic systems}
We now use the obtained multiplication maps to compute the solutions $\V(\I)$ of square systems of polynomial equations in the following way. We perform a simultaneous diagonalization of the identity matrix together with the $n$ multiplication maps $m_{x_i}$. For this, we use the method \texttt{cpd\_gevd} in Tensorlab \cite{vervliet2016tensorlab,LRA1993,dL2006}. We compare the results (accuracy and computation time) with the homotopy solvers BertiniLab \cite{bates2016bertinilab} and PHClab \cite{guan2008phclab}. To obtain the results, we used Matlab and we generated generic polynomials $f$ in the following way. We fix a Newton polytope $P$ of $f$ and to every point in $P \cap \Ints^n$ we assign a real number drawn from a normal distribution with $\mu = 0$ and $\sigma = 1$ (using the \texttt{randn} command in Matlab). These numbers are the coefficients of the monomials in $S(f)$. To measure the accuracy of the resulting multiplication matrices, we calculate the condition number of the matrix inverted in step \ref{solve} of Algorithm \ref{alg:dense}. The accuracy of a solution $z$ of a square system $f_1 = \ldots = f_n = 0$ is measured by the \textit{residual}.
\begin{definition}
Given a square system of polynomial equations $f_1 = \ldots = f_n = 0$ with $f_1, \ldots,f_n \in \C[\x]$ and a point $z \in \C^n$. The \textup{residual} $r$ of $z$ is defined as 
$$ r_i = \frac{|f_i(z)|}{f_{i,\textup{abs}}(z_{\textup{abs}}) +1}, \qquad r = \frac{1}{n} \sum_{i=1}^n r_i,$$
where $|\cdot|$ denotes the absolute value, $f_{i,\textup{abs}}$ is $f_i$ where the coefficients $c_{\alpha,i}$ of $f_i$ are replaced by their absolute values and $z_{\textup{abs}}$ is the point in $\C^n$ obtained by taking the absolute values of all the components of $z$. 
\end{definition}
The term $+1$ in the denominator of the $r_i$ makes it clear that we are using a mixed relative and absolute criterion, to take into account the possibility that $f_{i,\textup{abs}}(z_{\textup{abs}})=0$.\\

We first investigate the influence of the automated choice of basis made in our algorithm. We compare it to the fixed choice of the block basis given in \eqref{blockbasis}. This is the basis that is used (implicitly) in root finding using $u$-resultants \cite[Chapter 3]{cox2}. We first check that it is not just the block basis itself that comes out of our algorithm. We generated two random dense polynomials $f_1,f_2 \in \C[x_1,x_2]$ of degree $d_1 = d_2 = 10$. The support of the associated dense Macaulay matrix $M$ is all monomials of degree up to $d_1+d_2-1 = 19$. The basis $\OO$ should count 100 elements (Theorem \ref{bezout}). Figure \ref{fig:basischoice} shows that, indeed, the choice of basis is significantly different. Note also that the resulting basis does not have the connected to 1 property. \\
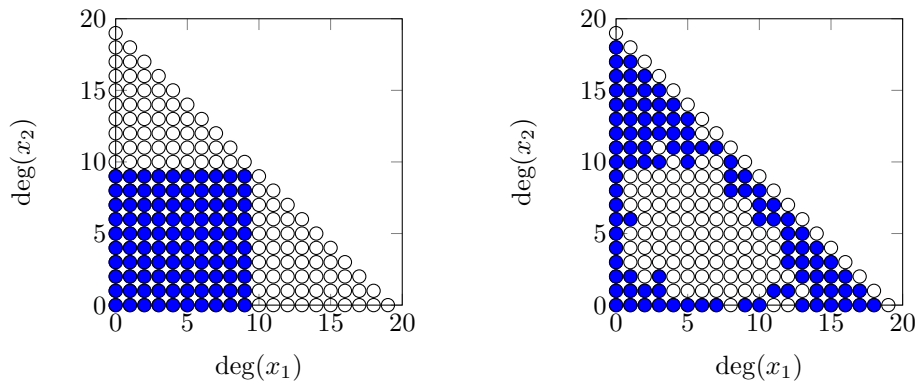
\begin{figure}[H]
\centering
%
%
\definecolor{mycolor1}{rgb}{0.00000,0.44700,0.74100}%
\begin{tikzpicture}

\begin{axis}[%
width=1.5in,
height=1.5in,
at={(0.685in,0.241in)},
scale only axis,
xmin=0,
xmax=20,
xlabel = $\deg(x_1)$,
ymin=0,
ymax=20,
ylabel = $\deg(x_2)$,
axis background/.style={fill=white}
]
\addplot [color=blue, draw=none, mark size=2.5pt, mark=*, mark options={solid, blue}, forget plot]
  table[row sep=crcr]{%
0	0\\
1	0\\
0	1\\
2	0\\
1	1\\
0	2\\
3	0\\
2	1\\
1	2\\
0	3\\
4	0\\
3	1\\
2	2\\
1	3\\
0	4\\
5	0\\
4	1\\
3	2\\
2	3\\
1	4\\
0	5\\
6	0\\
5	1\\
4	2\\
3	3\\
2	4\\
1	5\\
0	6\\
7	0\\
6	1\\
5	2\\
4	3\\
3	4\\
2	5\\
1	6\\
0	7\\
8	0\\
7	1\\
6	2\\
5	3\\
4	4\\
3	5\\
2	6\\
1	7\\
0	8\\
9	0\\
8	1\\
7	2\\
6	3\\
5	4\\
4	5\\
3	6\\
2	7\\
1	8\\
0	9\\
9	1\\
8	2\\
7	3\\
6	4\\
5	5\\
4	6\\
3	7\\
2	8\\
1	9\\
9	2\\
8	3\\
7	4\\
6	5\\
5	6\\
4	7\\
3	8\\
2	9\\
9	3\\
8	4\\
7	5\\
6	6\\
5	7\\
4	8\\
3	9\\
9	4\\
8	5\\
7	6\\
6	7\\
5	8\\
4	9\\
9	5\\
8	6\\
7	7\\
6	8\\
5	9\\
9	6\\
8	7\\
7	8\\
6	9\\
9	7\\
8	8\\
7	9\\
9	8\\
8	9\\
9	9\\
};
\addplot [color=black, draw=none, mark size=2.5pt, mark=o, mark options={solid, black}, forget plot]
  table[row sep=crcr]{%
10	0\\
0	10\\
11	0\\
10	1\\
1	10\\
0	11\\
12	0\\
11	1\\
10	2\\
2	10\\
1	11\\
0	12\\
13	0\\
12	1\\
11	2\\
10	3\\
3	10\\
2	11\\
1	12\\
0	13\\
14	0\\
13	1\\
12	2\\
11	3\\
10	4\\
4	10\\
3	11\\
2	12\\
1	13\\
0	14\\
15	0\\
14	1\\
13	2\\
12	3\\
11	4\\
10	5\\
5	10\\
4	11\\
3	12\\
2	13\\
1	14\\
0	15\\
16	0\\
15	1\\
14	2\\
13	3\\
12	4\\
11	5\\
10	6\\
6	10\\
5	11\\
4	12\\
3	13\\
2	14\\
1	15\\
0	16\\
17	0\\
16	1\\
15	2\\
14	3\\
13	4\\
12	5\\
11	6\\
10	7\\
7	10\\
6	11\\
5	12\\
4	13\\
3	14\\
2	15\\
1	16\\
0	17\\
18	0\\
17	1\\
16	2\\
15	3\\
14	4\\
13	5\\
12	6\\
11	7\\
10	8\\
8	10\\
7	11\\
6	12\\
5	13\\
4	14\\
3	15\\
2	16\\
1	17\\
0	18\\
19	0\\
18	1\\
17	2\\
16	3\\
15	4\\
14	5\\
13	6\\
12	7\\
11	8\\
10	9\\
9	10\\
8	11\\
7	12\\
6	13\\
5	14\\
4	15\\
3	16\\
2	17\\
1	18\\
0	19\\
0	0\\
1	0\\
0	1\\
2	0\\
1	1\\
0	2\\
3	0\\
2	1\\
1	2\\
0	3\\
4	0\\
3	1\\
2	2\\
1	3\\
0	4\\
5	0\\
4	1\\
3	2\\
2	3\\
1	4\\
0	5\\
6	0\\
5	1\\
4	2\\
3	3\\
2	4\\
1	5\\
0	6\\
7	0\\
6	1\\
5	2\\
4	3\\
3	4\\
2	5\\
1	6\\
0	7\\
8	0\\
7	1\\
6	2\\
5	3\\
4	4\\
3	5\\
2	6\\
1	7\\
0	8\\
9	0\\
8	1\\
7	2\\
6	3\\
5	4\\
4	5\\
3	6\\
2	7\\
1	8\\
0	9\\
9	1\\
8	2\\
7	3\\
6	4\\
5	5\\
4	6\\
3	7\\
2	8\\
1	9\\
9	2\\
8	3\\
7	4\\
6	5\\
5	6\\
4	7\\
3	8\\
2	9\\
9	3\\
8	4\\
7	5\\
6	6\\
5	7\\
4	8\\
3	9\\
9	4\\
8	5\\
7	6\\
6	7\\
5	8\\
4	9\\
9	5\\
8	6\\
7	7\\
6	8\\
5	9\\
9	6\\
8	7\\
7	8\\
6	9\\
9	7\\
8	8\\
7	9\\
9	8\\
8	9\\
9	9\\
};
\end{axis}

\begin{axis}[%
width=1.5in,
height=1.5in,
at={(3.304in,0.241in)},
scale only axis,
xmin=0,
xmax=20,
xlabel = $\deg(x_1)$,
ymin=0,
ymax=20,
ylabel = $\deg(x_2)$,
axis background/.style={fill=white}
]
\addplot [color=blue, draw=none, mark size=2.5pt, mark=*, mark options={solid, blue}, forget plot]
  table[row sep=crcr]{%
0	12\\
13	0\\
12	1\\
0	7\\
0	8\\
3	0\\
0	3\\
3	1\\
0	4\\
2	1\\
10	0\\
3	10\\
2	11\\
1	12\\
0	13\\
14	0\\
0	11\\
1	0\\
1	6\\
1	2\\
7	0\\
6	0\\
5	0\\
4	0\\
0	5\\
2	0\\
0	2\\
2	12\\
1	13\\
0	14\\
15	0\\
14	1\\
13	2\\
12	3\\
0	0\\
3	2\\
0	6\\
1	10\\
0	9\\
9	0\\
5	10\\
4	11\\
3	12\\
2	13\\
1	14\\
0	15\\
16	0\\
15	1\\
14	2\\
13	3\\
12	4\\
1	1\\
10	6\\
11	1\\
8	8\\
0	10\\
1	11\\
5	11\\
4	12\\
3	13\\
2	14\\
1	15\\
0	16\\
17	0\\
16	1\\
15	2\\
14	3\\
13	4\\
12	5\\
11	6\\
10	7\\
9	8\\
8	9\\
0	1\\
6	11\\
5	12\\
4	13\\
3	14\\
2	15\\
1	16\\
0	17\\
18	0\\
17	1\\
16	2\\
15	3\\
14	4\\
13	5\\
12	6\\
11	7\\
10	8\\
9	9\\
8	10\\
7	11\\
2	10\\
5	13\\
4	14\\
3	15\\
2	16\\
1	17\\
0	18\\
};
\addplot [color=black, draw=none, mark size=2.5pt, mark=o, mark options={solid, black}, forget plot]
  table[row sep=crcr]{%
19	0\\
18	1\\
17	2\\
16	3\\
15	4\\
14	5\\
13	6\\
12	7\\
11	8\\
10	9\\
9	10\\
8	11\\
7	12\\
6	13\\
5	14\\
4	15\\
3	16\\
2	17\\
1	18\\
0	19\\
6	4\\
7	4\\
6	5\\
7	5\\
8	4\\
7	3\\
6	3\\
5	3\\
6	6\\
8	5\\
5	5\\
7	6\\
5	4\\
8	6\\
6	7\\
7	7\\
6	8\\
5	6\\
8	3\\
6	2\\
5	2\\
5	7\\
8	2\\
7	2\\
5	8\\
9	4\\
5	9\\
4	4\\
9	5\\
4	3\\
9	3\\
10	4\\
3	5\\
4	5\\
3	7\\
3	6\\
4	6\\
7	8\\
3	3\\
4	8\\
3	4\\
9	6\\
9	2\\
8	7\\
10	3\\
5	1\\
4	7\\
3	8\\
3	9\\
2	7\\
7	1\\
11	2\\
10	5\\
2	8\\
9	1\\
2	6\\
2	5\\
6	9\\
4	2\\
11	4\\
2	9\\
11	3\\
10	2\\
4	9\\
4	10\\
7	9\\
8	1\\
13	1\\
1	8\\
2	2\\
12	2\\
7	10\\
2	3\\
2	4\\
3	11\\
4	1\\
1	7\\
10	1\\
12	0\\
9	7\\
8	0\\
1	5\\
11	5\\
1	9\\
6	1\\
1	3\\
1	4\\
11	0\\
6	12\\
6	10\\
0	12\\
13	0\\
12	1\\
0	7\\
0	8\\
3	0\\
0	3\\
3	1\\
0	4\\
2	1\\
10	0\\
3	10\\
2	11\\
1	12\\
0	13\\
14	0\\
0	11\\
1	0\\
1	6\\
1	2\\
7	0\\
6	0\\
5	0\\
4	0\\
0	5\\
2	0\\
0	2\\
2	12\\
1	13\\
0	14\\
15	0\\
14	1\\
13	2\\
12	3\\
0	0\\
3	2\\
0	6\\
1	10\\
0	9\\
9	0\\
5	10\\
4	11\\
3	12\\
2	13\\
1	14\\
0	15\\
16	0\\
15	1\\
14	2\\
13	3\\
12	4\\
1	1\\
10	6\\
11	1\\
8	8\\
0	10\\
1	11\\
5	11\\
4	12\\
3	13\\
2	14\\
1	15\\
0	16\\
17	0\\
16	1\\
15	2\\
14	3\\
13	4\\
12	5\\
11	6\\
10	7\\
9	8\\
8	9\\
0	1\\
6	11\\
5	12\\
4	13\\
3	14\\
2	15\\
1	16\\
0	17\\
18	0\\
17	1\\
16	2\\
15	3\\
14	4\\
13	5\\
12	6\\
11	7\\
10	8\\
9	9\\
8	10\\
7	11\\
2	10\\
5	13\\
4	14\\
3	15\\
2	16\\
1	17\\
0	18\\
};
\end{axis}
\end{tikzpicture}%
\caption{Left: the block basis $\OO$ given in \eqref{blockbasis}. Right: the basis $\OO$ chosen by Algorithm \ref{alg:dense}. Black circles indicate the support $S(M)$ of the Macaulay matrix. }
\label{fig:basischoice}
\end{figure}
We now check the accuracy of the multiplication matrices by computing the condition number of the coefficient matrix inverted in step \ref{solve}. For a condition number of order $10^l$, we expect to loose $l$ accurate digits w.r.t.~the machine precision. Figure \ref{fig:CNR2} shows the results for bivariate systems of increasing degree\footnote{By degree $d$ we mean here that both polynomials $f_1$ and $f_2$ are generic of degree $d$.} up to 20. By using the QR decomposition with optimal column pivoting the condition number is controlled and it gets no larger than $\pm 10^4$. With our machine precision of order $10^{-16}$ (double precision), this means that the forward error on the multiplication matrices is of order $10^{-12}$. For the same set of generic bivariate systems of degree 1 up to 20 we also calculated the maximal residual of all of the calculated solutions. This is shown in the right part of Figure \ref{fig:CNR2}. One can expect that more accurate multiplication maps lead to more accurate solutions, which is confirmed by the figure. For degrees higher than 15, the solutions obtained using the block basis no longer made sense. The results are avaraged out over 20 experiments. These results clearly show that a numerically justified choice of basis is crucial for the feasibility of normal form algorithms to compute multiplication matrices. 
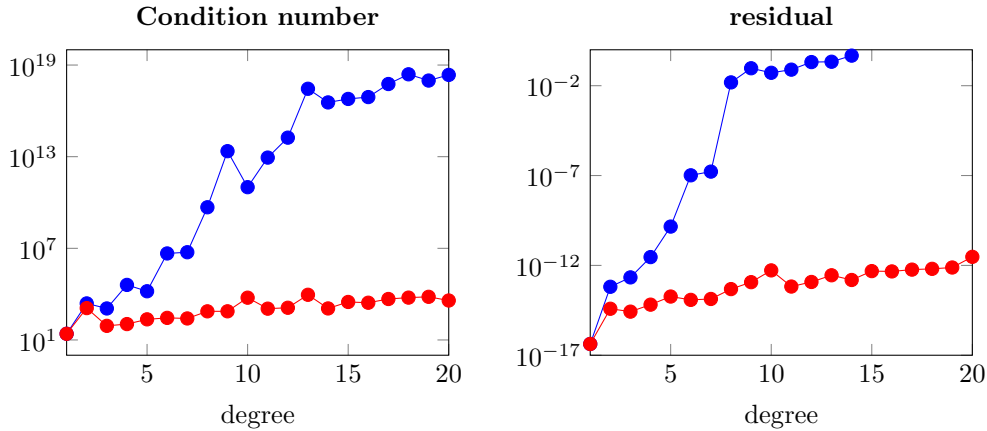
\begin{figure}[H]
\centering
%
%
\begin{tikzpicture}

\begin{axis}[%
width=2in,
height=1.6in,
at={(0.758in,0.481in)},
scale only axis,
xmin=1,
xmax=20,
xlabel = degree,
ymode=log,
ymin=1,
ymax=1e+20,
yminorticks=true,
axis background/.style={fill=white},
title style={font=\bfseries},
title={Condition number},
]
\addplot [color=blue, mark size=2.5pt, mark=*, mark options={solid, blue}]
  table[row sep=crcr]{%
1	25.3215672944736\\
2	2466.02281568931\\
3	1153.90755461518\\
4	40219.8729265116\\
5	15438.3324815829\\
6	4608688.61957956\\
7	5545298.43423532\\
8	4871916017.77275\\
9	22953360655962.6\\
10	100760598233.847\\
11	8727752223160.3\\
12	176422160029624\\
13	2.79725264957645e+17\\
14	3.5689169204024e+16\\
15	6.04219278259841e+16\\
16	7.99966638947072e+16\\
17	5.72615852299793e+17\\
18	2.47694181649882e+18\\
19	9.65871090874315e+17\\
20	2.26190462718888e+18\\
};

\addplot [color=red, mark size=2.5pt, mark=*, mark options={solid, red}]
  table[row sep=crcr]{%
1	25.3215672944736\\
2	1219.35521765966\\
3	83.9249208102392\\
4	109.712696191297\\
5	223.747008670015\\
6	279.293930258821\\
7	255.787651543016\\
8	748.128638637579\\
9	751.43928508943\\
10	5858.12339604515\\
11	1119.53166396966\\
12	1283.67721900744\\
13	9074.97629430884\\
14	1157.03592721806\\
15	3071.68207733925\\
16	2740.48769614203\\
17	4807.73071641789\\
18	5922.98224984823\\
19	6678.04750818301\\
20	3829.00442885068\\
};

\end{axis}

\begin{axis}[%
width=2in,
height=1.6in,
at={(3.5in,0.481in)},
scale only axis,
unbounded coords=jump,
xmin=1,
xmax=20,
xlabel = degree,
ymode=log,
ymin=1e-17,
ymax=1,
yminorticks=true,
axis background/.style={fill=white},
title style={font=\bfseries},
title={residual},
legend style={legend cell align=left, align=left, draw=white!15!black}
]
\addplot [color=blue, mark size=2.5pt, mark=*, mark options={solid, blue}]
  table[row sep=crcr]{%
1	4.22686827099288e-17\\
2	6.45498281413041e-14\\
3	2.14993152453622e-13\\
4	2.89973524289785e-12\\
5	1.44750247529949e-10\\
6	1.03133654132114e-07\\
7	1.65740289600093e-07\\
8	0.0152917998674013\\
9	0.093761987748758\\
10	0.0523685333888463\\
11	0.0783667055734093\\
12	0.206546303750913\\
13	0.215684706293172\\
14	0.472487183667272\\
15	inf\\
16	inf\\
17	inf\\
18	inf\\
19	inf\\
20	inf\\
}; \label{block_res_2}

\addplot [color=red, mark size=2.5pt, mark=*, mark options={solid, red}]
  table[row sep=crcr]{%
1	4.22686827099288e-17\\
2	3.8666525535943e-15\\
3	2.6682703509177e-15\\
4	6.53198827998433e-15\\
5	1.8486693091982e-14\\
6	1.2048618895646e-14\\
7	1.35377131015597e-14\\
8	4.80044147129159e-14\\
9	1.16602056639813e-13\\
10	5.27121909306227e-13\\
11	6.65403092730804e-14\\
12	1.19482587405875e-13\\
13	2.81999669478965e-13\\
14	1.54181639312716e-13\\
15	4.80894388262674e-13\\
16	4.61665806373287e-13\\
17	5.82342798149505e-13\\
18	6.43232014631959e-13\\
19	7.63169136883513e-13\\
20	2.9900647312457e-12\\
}; \label{auto_res_2}

\end{axis}
\end{tikzpicture}%
\caption{Left: condition number for the computation of the multiplication matrices with block basis (\ref{block_res_2}) and smart choice of basis (\ref{auto_res_2}) for bivariate systems of increasing degree. Right: Maximal residual with the block basis (\ref{block_res_2}) and the QR choice of basis (\ref{auto_res_2}) for the same systems. }
\label{fig:CNR2}
\end{figure}
In the following, we only work with the automated choice of basis. Some results for dense systems with more variables are shown in Figure \ref{fig:densemorevar}. The figure shows that even for large systems, all solutions are found with a small residual. For example, in the case $n = 3$ with degree $21$, there are 9261 solutions in $\C^3$, all found with a residual smaller than $10^{-10}$. We also note that the residual would drop to machine precision after one `refining' iteration of Newton's method.\\

As for the computation time, the figure shows that the method is very sensitive to the number of variables (it suffers from the `curse of dimensionality'). The asymptotic complexity is $O(d^{3n})$, where $d$ is the degree. Intuitively, we find the coordinates of the $d^n$ solutions as eigenvalues and the cost of the algorithm is the number of eigenvalues cubed. \\

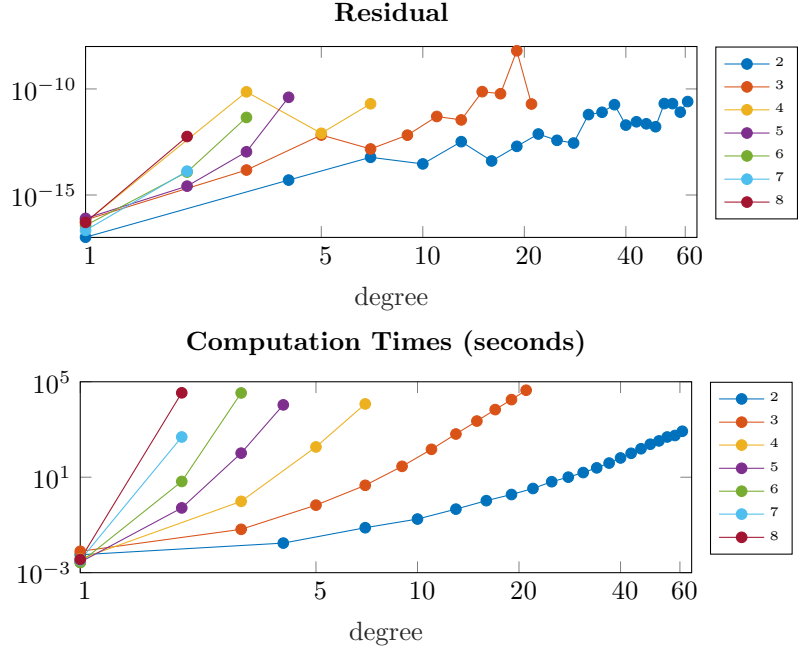
\begin{figure}
\centering
%
%
\definecolor{mycolor1}{rgb}{0.00000,0.44700,0.74100}%
\definecolor{mycolor2}{rgb}{0.85000,0.32500,0.09800}%
\definecolor{mycolor3}{rgb}{0.92900,0.69400,0.12500}%
\definecolor{mycolor4}{rgb}{0.49400,0.18400,0.55600}%
\definecolor{mycolor5}{rgb}{0.46600,0.67400,0.18800}%
\definecolor{mycolor6}{rgb}{0.30100,0.74500,0.93300}%
\definecolor{mycolor7}{rgb}{0.63500,0.07800,0.18400}%
\begin{tikzpicture}

\begin{axis}[%
width=3.2in,
height=1in,
at={(1.183in,0.482in)},
scale only axis,
xmode=log,
xmin=1,
xmax=65,
xtick = {1,5,10,20,40,60},
log x ticks with fixed point,
xlabel style={font=\color{white!15!black}},
x tick label style={/pgf/number format/1000 sep=\,},
xlabel={degree},
ymode=log,
ymin=1e-17,
ymax=1e-08,
ytick = {1e-15,1e-10},
yminorticks=true,
axis background/.style={fill=white},
title style={font=\bfseries},
title={Residual},
legend style ={font = \tiny},
legend pos=outer north east,
]
\addplot [color=mycolor1, mark size=2pt, mark=*, mark options={solid, mycolor1}]
  table[row sep=crcr]{%
1	1.02372265884633e-17\\
4	5.04673146333949e-15\\
7	5.9903632977243e-14\\
10	2.9217301019183e-14\\
13	3.26446282164688e-13\\
16	4.02277690021145e-14\\
19	1.96787844356176e-13\\
22	7.49707676147145e-13\\
25	3.78491416296196e-13\\
28	2.81228460813394e-13\\
31	6.18487467458757e-12\\
34	7.94282170367728e-12\\
37	1.80988843310861e-11\\
40	1.97374181840396e-12\\
43	2.82839392496916e-12\\
46	2.26946251069382e-12\\
49	1.65203609369149e-12\\
52	2.03971082702321e-11\\
55	2.02748483198475e-11\\
58	8.05219599651947e-12\\
61	2.53686934924506e-11\\
};
\addlegendentry{2}

\addplot [color=mycolor2, mark size=2pt, mark=*, mark options={solid, mycolor2}]
  table[row sep=crcr]{%
1	6.6958096065144e-17\\
3	1.50039646970237e-14\\
5	6.7019285541216e-13\\
7	1.48259213684847e-13\\
9	6.61705678359243e-13\\
11	5.02818224622918e-12\\
13	3.46692240822172e-12\\
15	7.44225742287936e-11\\
17	5.95593050914858e-11\\
19	6.28945830639214e-09\\
21	1.94331528196856e-11\\
};
\addlegendentry{3}

\addplot [color=mycolor3, mark size=2pt, mark=*, mark options={solid, mycolor3}]
  table[row sep=crcr]{%
1	5.20485867378386e-17\\
3	7.29876633220901e-11\\
5	8.10496627742903e-13\\
7	1.99756015684251e-11\\
};
\addlegendentry{4}

\addplot [color=mycolor4, mark size=2pt, mark=*, mark options={solid, mycolor4}]
  table[row sep=crcr]{%
1	7.89028024563525e-17\\
2	2.61465587504515e-15\\
3	1.08919489579191e-13\\
4	4.04186092207232e-11\\
};
\addlegendentry{5}

\addplot [color=mycolor5, mark size=2pt, mark=*, mark options={solid, mycolor5}]
  table[row sep=crcr]{%
1	3.67251406723592e-17\\
2	1.20658463675021e-14\\
3	4.52636507637904e-12\\
};
\addlegendentry{6}

\addplot [color=mycolor6, mark size=2pt, mark=*, mark options={solid, mycolor6}]
  table[row sep=crcr]{%
1	2.15022744815335e-17\\
2	1.33066664045613e-14\\
};
\addlegendentry{7}

\addplot [color=mycolor7, mark size=2pt, mark=*, mark options={solid, mycolor7}]
  table[row sep=crcr]{%
1	5.12175648854036e-17\\
2	5.7112769817212e-13\\
};
\addlegendentry{8}

\end{axis}
\end{tikzpicture}%
%
%
\definecolor{mycolor1}{rgb}{0.00000,0.44700,0.74100}%
\definecolor{mycolor2}{rgb}{0.85000,0.32500,0.09800}%
\definecolor{mycolor3}{rgb}{0.92900,0.69400,0.12500}%
\definecolor{mycolor4}{rgb}{0.49400,0.18400,0.55600}%
\definecolor{mycolor5}{rgb}{0.46600,0.67400,0.18800}%
\definecolor{mycolor6}{rgb}{0.30100,0.74500,0.93300}%
\definecolor{mycolor7}{rgb}{0.63500,0.07800,0.18400}%
\begin{tikzpicture}

\begin{axis}[%
width=3.2in,
height=1in,
at={(0.97in,0.477in)},
scale only axis,
xmode=log,
xmin=1,
xmax=65,
xminorticks=true,
xtick = {1,5,10,20,40,60},
log x ticks with fixed point,
xlabel style={font=\color{white!15!black}},
xlabel={degree},
ymode=log,
ymin=1e-03,
ymax=100000,
yminorticks=true,
axis background/.style={fill=white},
title style={font=\bfseries},
title={Computation Times (seconds)},
legend pos=outer north east,
legend style = {font = \tiny}
]
\addplot [color=mycolor1, mark size=2pt, mark=*, mark options={solid, mycolor1}]
  table[row sep=crcr]{%
1	0.005619\\
4	0.017351\\
7	0.076557\\
10	0.176747\\
13	0.460032\\
16	1.042081\\
19	1.8846\\
22	3.371563\\
25	6.479277\\
28	10.054097\\
31	15.671086\\
34	24.663323\\
37	39.203874\\
40	64.115248\\
43	100.628464\\
46	156.000319\\
49	241.465345\\
52	332.387522\\
55	484.043766\\
58	560.395579\\
61	844.705696\\
};
\addlegendentry{2}

\addplot [color=mycolor2, mark size=2pt, mark=*, mark options={solid, mycolor2}]
  table[row sep=crcr]{%
1	0.007861\\
3	0.065549\\
5	0.676147\\
7	4.581881\\
9	28.575878\\
11	147.562087\\
13	646.011522\\
15	2254.740792\\
17	6822.11999\\
19	17884.330085\\
21	44185.385279\\
};
\addlegendentry{3}

\addplot [color=mycolor3, mark size=2pt, mark=*, mark options={solid, mycolor3}]
  table[row sep=crcr]{%
1	0.003766\\
3	0.975156\\
5	186.081079\\
7	11787.284477\\
};
\addlegendentry{4}

\addplot [color=mycolor4, mark size=2pt, mark=*, mark options={solid, mycolor4}]
  table[row sep=crcr]{%
1	0.002765\\
2	0.51754\\
3	102.112197\\
4	10757.779889\\
};
\addlegendentry{5}

\addplot [color=mycolor5, mark size=2pt, mark=*, mark options={solid, mycolor5}]
  table[row sep=crcr]{%
1	0.002671\\
2	6.649938\\
3	33991.633185\\
};
\addlegendentry{6}

\addplot [color=mycolor6, mark size=2pt, mark=*, mark options={solid, mycolor6}]
  table[row sep=crcr]{%
1	0.003359\\
2	485.640698\\
};
\addlegendentry{7}

\addplot [color=mycolor7, mark size=2pt, mark=*, mark options={solid, mycolor7}]
  table[row sep=crcr]{%
1	0.003589\\
2	34425.610436\\
};
\addlegendentry{8}

\end{axis}
\end{tikzpicture}%
\caption{Maximal residual and computation times for systems of increasing degree with $n = 2, \ldots, 8$.}
\label{fig:densemorevar}
\end{figure}

We compare our method to the Matlab interfaces of the homotopy continuation packages Bertini \cite{bates2016bertinilab} and PHCpack \cite{guan2008phclab}\footnote{We used default settings for both solvers.}. The results are shown in Figure \ref{fig:comparison}. The figure confirms that the complexity of our method grows drastically with $n$. For $n=2$, however, we are slightly faster for degrees at least up to 25. In all figures, the residuals of our computed solutions are slightly bigger than the ones from the homotopy methods. This is because these methods intrinsically make use of Newton-Raphson refinement. One Newton sweep over our solutions would lead to a residual of order machine precision as well, because of the quadratic convergence property. An important remark is that continuation methods do not return \textit{all} solutions in all cases. The methods might give up on certain paths along the way if the algorithm decides that the path seems to be diverging to infinity or if it crosses an ill-conditioned region. For $n = 2$ and degree 20, PHClab returns 398 solutions (2 solutions are lost) within slightly less than 4 seconds. For $n = 2$, degree 40, it takes 57 seconds to find 1575 out of the 1600 solutions. Using Bertini with double precision arithmetic \cite{bates2013numerically}, we find all solutions for $n=2$, degree 20 within 12 seconds and 1587 out of 1600 solutions for $n=2$, degree 40 within 350 seconds. 

\begin{figure}
\centering
%
%
\begin{tikzpicture}[scale = 0.88]

\begin{axis}[%
width=1.924in,
height=1.361in,
at={(0.747in,2.384in)},
scale only axis,
xmin=1,
xmax=25,
xlabel = degree,
ymode=log,
ymin=1e-03,
ymax=10000,
yminorticks=true,
axis background/.style={fill=white},
title style={font=\bfseries},
title={Computation times, $n=2$},
legend style={legend cell align=left, align=left, draw=white!15!black}
]
\addplot [color=blue, mark size=2.5pt, mark=*, mark options={solid, blue}]
  table[row sep=crcr]{%
1	0.06864\\
3	0.075507\\
5	0.116806\\
7	0.203197\\
9	0.312432\\
11	0.551294\\
13	0.720504\\
15	1.175969\\
17	1.919639\\
19	2.735276\\
21	4.517226\\
23	6.09951\\
25	7.795932\\
}; \label{phc}

\addplot [color=red, mark size=2.5pt, mark=*, mark options={solid, red}]
  table[row sep=crcr]{%
1	0.004372\\
3	0.019003\\
5	0.024844\\
7	0.052617\\
9	0.108644\\
11	0.200859\\
13	0.392028\\
15	0.634454\\
17	1.009585\\
19	1.597429\\
21	2.405489\\
23	4.772158\\
25	5.714158\\
}; \label{qr}

\addplot [color=green, mark size=2.5pt, mark=*, mark options={solid, green}]
  table[row sep=crcr]{%
1	0.032115\\
3	0.03066\\
5	0.075706\\
7	0.094997\\
9	1.089896\\
11	0.648798\\
13	3.59569\\
15	5.892224\\
17	25.713985\\
19	87.2742\\
21	90.468217\\
23	320.404604\\
25	483.085849\\
}; \label{brt}

\end{axis}

\begin{axis}[%
width=1.924in,
height=1.361in,
at={(3.279in,2.384in)},
scale only axis,
xmin=1,
xmax=25,
xlabel = degree,
ymode=log,
ymin=1e-18,
ymax=1e-10,
yminorticks=true,
axis background/.style={fill=white},
title style={font=\bfseries},
title={Residuals, $n=2$},
legend style={legend cell align=left, align=left, draw=white!15!black}
]
\addplot [color=blue, mark size=2.5pt, mark=*, mark options={solid, blue}]
  table[row sep=crcr]{%
1	7.71037335237581e-16\\
3	1.81650313467049e-15\\
5	2.66898157846563e-15\\
7	2.16193045160878e-15\\
9	2.65778128496011e-15\\
11	3.87498591067819e-15\\
13	5.52362633878034e-15\\
15	4.77727296024457e-15\\
17	7.23305765667744e-15\\
19	6.93270205768891e-15\\
21	4.27936889766103e-15\\
23	6.35924844492927e-15\\
25	7.06920671149919e-15\\
};

\addplot [color=red, mark size=2.5pt, mark=*, mark options={solid, red}]
  table[row sep=crcr]{%
1	3.7123424077371e-17\\
3	2.06745822745377e-14\\
5	5.9054997446538e-15\\
7	1.62931669094623e-14\\
9	4.05529318596093e-15\\
11	9.08606938205056e-13\\
13	3.45496690855829e-14\\
15	3.44360438302846e-13\\
17	6.01984472292103e-13\\
19	1.82061134067278e-12\\
21	2.07185016428615e-12\\
23	2.13466610023418e-13\\
25	2.58744644262601e-12\\
};

\addplot [color=green, mark size=2.5pt, mark=*, mark options={solid, green}]
  table[row sep=crcr]{%
1	1.51622921944067e-16\\
3	2.05386318758003e-16\\
5	8.3953761404299e-16\\
7	3.52193255369332e-15\\
9	2.97130804771785e-15\\
11	3.85192182960722e-15\\
13	2.28034204523747e-15\\
15	5.23514849906015e-15\\
17	3.69240718343633e-15\\
19	1.82006615673694e-15\\
21	2.72665290402141e-15\\
23	1.8232862928845e-15\\
25	2.69062674596102e-15\\
};

\end{axis}

\begin{axis}[%
width=1.924in,
height=1.361in,
at={(0.747in,0.25in)},
scale only axis,
xmin=1,
xmax=11,
xlabel = degree,
ymode=log,
ymin=1e-04,
ymax=1000,
yminorticks=true,
axis background/.style={fill=white},
title style={font=\bfseries},
title={Computation times, $n=3$},
legend style={legend cell align=left, align=left, draw=white!15!black}
]
\addplot [color=blue, mark size=2.5pt, mark=*, mark options={solid, blue}]
  table[row sep=crcr]{%
1	0.084805\\
3	0.157546\\
5	0.748843\\
7	3.30677\\
9	11.88676\\
11	36.168919\\
};

\addplot [color=red, mark size=2.5pt, mark=*, mark options={solid, red}]
  table[row sep=crcr]{%
1	0.001507\\
3	0.062687\\
5	0.605549\\
7	5.201016\\
9	37.624846\\
11	197.241078\\
};

\addplot [color=green, mark size=2.5pt, mark=*, mark options={solid, green}]
  table[row sep=crcr]{%
1	0.034699\\
3	0.081487\\
5	0.851126\\
7	9.066456\\
9	39.865136\\
11	170.896488\\
};

\end{axis}

\begin{axis}[%
width=1.924in,
height=1.361in,
at={(3.279in,0.25in)},
scale only axis,
xmin=1,
xmax=11,
xlabel = degree,
ymode=log,
ymin=1e-18,
ymax=1e-10,
yminorticks=true,
axis background/.style={fill=white},
title style={font=\bfseries},
title={Residuals, $n=3$},
legend style={legend cell align=left, align=left, draw=white!15!black}
]
\addplot [color=blue, mark size=2.5pt, mark=*, mark options={solid, blue}]
  table[row sep=crcr]{%
1	8.09025321599789e-16\\
3	1.21037351210162e-15\\
5	2.10471242672975e-15\\
7	2.9420311134104e-15\\
9	2.80689621230276e-15\\
11	2.53155278249436e-15\\
};

\addplot [color=red, mark size=2.5pt, mark=*, mark options={solid, red}]
  table[row sep=crcr]{%
1	4.45270915580194e-17\\
3	7.92787905443217e-15\\
5	7.87211062147659e-14\\
7	3.46834594437198e-14\\
9	4.05229529444664e-12\\
11	1.26768598779949e-11\\
};

\addplot [color=green, mark size=2.5pt, mark=*, mark options={solid, green}]
  table[row sep=crcr]{%
1	4.59289129439072e-17\\
3	2.58493596271408e-16\\
5	3.22733191999172e-16\\
7	1.49678520334602e-15\\
9	2.31191915367056e-15\\
11	1.74645003972257e-15\\
};

\end{axis}
\end{tikzpicture}%
%
%
\begin{tikzpicture}[scale = 0.88]

\begin{axis}[%
width=1.917in,
height=1.353in,
at={(0.745in,2.372in)},
scale only axis,
xmin=1,
xmax=5,
xlabel = degree,
ymode=log,
ymin=1e-04,
ymax=10000,
yminorticks=true,
axis background/.style={fill=white},
title style={font=\bfseries},
title={Computation times, $n=4$},
legend style={legend cell align=left, align=left, draw=white!15!black}
]
\addplot [color=blue, mark size=2.5pt, mark=*, mark options={solid, blue}]
  table[row sep=crcr]{%
1	0.090602\\
2	0.169293\\
3	0.500047\\
4	2.279334\\
5	9.607632\\
};

\addplot [color=red, mark size=2.5pt, mark=*, mark options={solid, red}]
  table[row sep=crcr]{%
1	0.001569\\
2	0.066506\\
3	0.922148\\
4	20.895306\\
5	228.76498\\
};

\addplot [color=green, mark size=2.5pt, mark=*, mark options={solid, green}]
  table[row sep=crcr]{%
1	0.030487\\
2	0.071725\\
3	0.563942\\
4	3.697541\\
5	25.282906\\
};

\end{axis}

\begin{axis}[%
width=1.917in,
height=1.353in,
at={(3.268in,2.372in)},
scale only axis,
xmin=1,
xmax=5,
xlabel = degree,
ymode=log,
ymin=1e-18,
ymax=1e-10,
yminorticks=true,
axis background/.style={fill=white},
title style={font=\bfseries},
title={Residuals, $n=4$},
legend style={legend cell align=left, align=left, draw=white!15!black}
]
\addplot [color=blue, mark size=2.5pt, mark=*, mark options={solid, blue}]
  table[row sep=crcr]{%
1	4.5938399387003e-16\\
2	1.16808274841526e-15\\
3	1.28594005421631e-15\\
4	1.58407500345178e-15\\
5	1.40468931918487e-15\\
};

\addplot [color=red, mark size=2.5pt, mark=*, mark options={solid, red}]
  table[row sep=crcr]{%
1	3.5689977555947e-17\\
2	8.94949115398091e-15\\
3	2.34281521745936e-14\\
4	4.3099609988503e-13\\
5	6.76404163825655e-13\\
};

\addplot [color=green, mark size=2.5pt, mark=*, mark options={solid, green}]
  table[row sep=crcr]{%
1	8.98718571451837e-17\\
2	1.63617796355172e-16\\
3	5.50443888781577e-16\\
4	3.34420700652991e-16\\
5	1.22352054168695e-15\\
};

\end{axis}

\begin{axis}[%
width=1.917in,
height=1.353in,
at={(0.745in,0.25in)},
scale only axis,
xmin=1,
xmax=3,
xlabel = degree,
xtick = {1,2,3},
ymode=log,
ymin=1e-04,
ymax=10000,
yminorticks=true,
axis background/.style={fill=white},
title style={font=\bfseries},
title={Computation times, $n=5$},
legend style={legend cell align=left, align=left, draw=white!15!black}
]
\addplot [color=blue, mark size=2.5pt, mark=*, mark options={solid, blue}]
  table[row sep=crcr]{%
1	0.071819\\
2	0.266505\\
3	2.608635\\
};

\addplot [color=red, mark size=2.5pt, mark=*, mark options={solid, red}]
  table[row sep=crcr]{%
1	0.001647\\
2	0.457668\\
3	119.099882\\
};

\addplot [color=green, mark size=2.5pt, mark=*, mark options={solid, green}]
  table[row sep=crcr]{%
1	0.036122\\
2	0.251775\\
3	5.100932\\
};

\end{axis}

\begin{axis}[%
width=1.917in,
height=1.353in,
at={(3.268in,0.25in)},
scale only axis,
xmin=1,
xmax=3,
xlabel = degree,
xtick = {1,2,3},
ymode=log,
ymin=1e-18,
ymax=1e-10,
yminorticks=true,
axis background/.style={fill=white},
title style={font=\bfseries},
title={Residuals, $n=5$},
legend style={legend cell align=left, align=left, draw=white!15!black}
]
\addplot [color=blue, mark size=2.5pt, mark=*, mark options={solid, blue}]
  table[row sep=crcr]{%
1	2.41113900069047e-16\\
2	1.00484276118674e-15\\
3	1.26121751036961e-15\\
};

\addplot [color=red, mark size=2.5pt, mark=*, mark options={solid, red}]
  table[row sep=crcr]{%
1	3.34798454251196e-17\\
2	4.9842792688368e-14\\
3	5.17646200852304e-13\\
};

\addplot [color=green, mark size=2.5pt, mark=*, mark options={solid, green}]
  table[row sep=crcr]{%
1	5.60667219746197e-17\\
2	1.83070802513614e-16\\
3	6.1206646032054e-16\\
};

\end{axis}
\end{tikzpicture}%
\caption{Comparison of the results for PHClab (\ref{phc}), BertiniLab (\ref{brt}) and our method (\ref{qr}) for $n = 2,3,4,5$.}
\label{fig:comparison}
\end{figure}
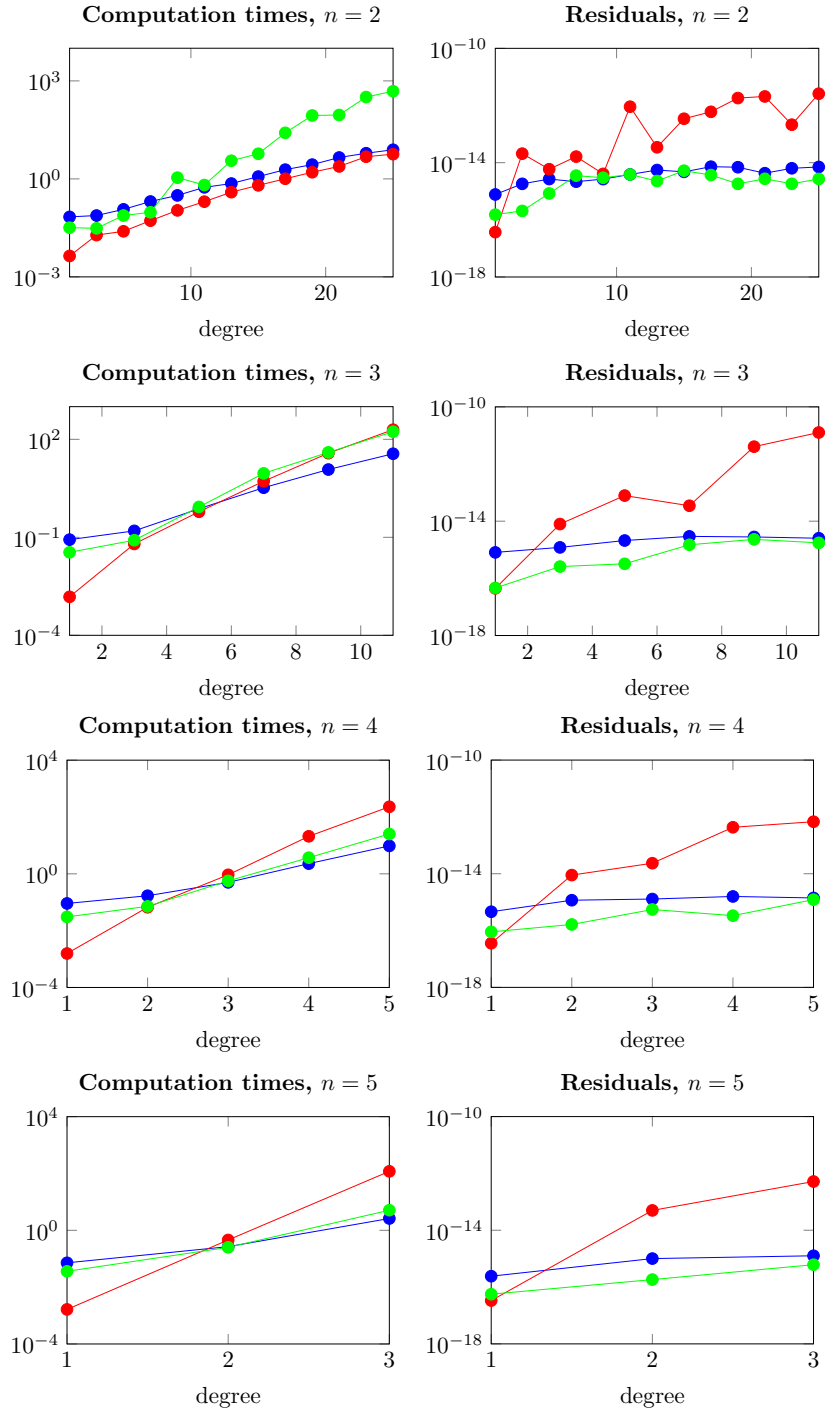

\section{Conclusion and future work}
We have presented a first normal form algorithm for zero-dimensional ideals coming from square polynomial systems that makes an automated, numerically justified choice of monomial basis for $\C[\x]/\I$ under certain genericity assumptions on $\I$. Our numerical experiments show that this choice of basis makes it possible to perform the normal form computation in finite precision, while it can go terribly wrong by manually choosing a basis. Some ideas for future work are: 
\begin{itemize}
\item Relaxing the genericity assumptions. What if the polynomials $f_1, \ldots, f_n$ are sparse? 
\item Solutions at infinity lead to linear dependencies in the columns of $M_b$, but it also causes the dimension of $\C[\x]/\I$ to drop. This can be incorporated in the algorithm.
\item For multiple solutions of a square polynomial system, the canonical polyadic decomposition does not work. The coupling between the different coordinates can be made by using the left eigenvectors of the multiplication maps. 
\item The implementation is done in Matlab and a lot of computation time is spent on the construction of the Macaulay matrix $M$. We believe that an implementation in Julia, C(++), Fortran, \ldots~could be a significant improvement.
\item Taking all possible monomial multiples of the input equations to construct $M$ leads to a number of redundant rows. This number becomes large very quickly when the number of variables increases. To reduce the computational cost and to enhance the performance for a larger number of variables, only a selection of the monomial multiples can be used. In doing so, a trade-off between speed and accuracy should be taken into account. 
\end{itemize}

\section*{Acknowledgements}
We want to thank David Cox for his useful comments and Bernard Mourrain for fruitful conversations. 
\bibliography{references}
 \bibliographystyle{abbrv}

\end{document}